\let\color@begingroup\relax
   \let\color@endgroup\relax}{}%
\def\fix@ieeecolor@hbox#1{%
  \hbox{\color@begingroup#1\color@endgroup}}
\patchcmd\@makecaption{\hbox}{\fix@ieeecolor@hbox}{}{\FAILED}
\patchcmd\@makecaption{\hbox}{\fix@ieeecolor@hbox}{}{\FAILED}
\newtheorem{Theorem}{\bf Theorem}
\newtheorem{Lemma}{\bf Lemma}
\newtheorem{Remark}{\bf Remark}
\newtheorem{Assumption}{\bf Assumption}
\newcommand{\bs}[1]{\boldsymbol{#1}}%
\newcommand\scalemath[2]{\scalebox{#1}{\mbox{\ensuremath{\displaystyle #2}}}}
\newcommand{\Prob}{\mathbb{P}}
\DeclareMathOperator{\blkdiag}{blkdiag}
\newcommand{\snorm}[1]{\ensuremath{\| #1\|}}
\newcommand{\bsw}{\boldsymbol{w}}
\newcommand{\Pemp}{\bar{\mathbb{P}}_{\pi}^{\hat{\mathcal{M}}}}
\newcommand{\Pcl}{{\mathbb{P}}_{\pi}^{\mathcal{M}}}
\newcommand{\Pclemp}{\bar{\mathbb{P}}_{\pi}^{\mathcal{M}}}
\newcommand{\Pwhat}{\bar{\mathbb{P}}_{\hat{\bsw}}}
\newcommand{\Pwbar}{\bar{\mathbb{P}}_{\bsw}}
\newcommand{\Pw}{\mathbb{P}_{\bsw}}
\newcommand{\wemp}{\hat{\bs{w}}^{i}}
\newcommand{\wtremp}{{\bs{w}}^{i}}
\newcommand{\yemp}{\hat{\bs{y}}^{i}}
\newcommand{\yclemp}{\bs{y}_{\mathrm{cl}}^{i}}
\newcommand{\PHI}{\Phi}
\newcommand{\PHIh}{\PHI}
\newcommand{\PHItilde}{\tilde{\Phi}}
\newcommand{\DELTAtilde}{\PHIh Z \Delta\!\mathcal{M}}
\newcommand{\xu}{\begin{bmatrix}\bs{x}\\ \bs{u} \end{bmatrix}}
\newcommand{\xui}{\begin{bmatrix}\bs{x}^i\\ \bs{u}^i \end{bmatrix}}
\newcommand{\hatK}{\mathcal{K}}
\newcommand{\resolvent}{R_{\Phi}}
\newcommand{\triangleqq}{:=}
\def\BibTeX{{\rm B\kern-.05em{\sc i\kern-.025em b}\kern-.08em
    T\kern-.1667em\lower.7ex\hbox{E}\kern-.125emX}}
\begin{document}
\author{Francesco Micheli, Anastasios Tsiamis and John Lygeros
\thanks{Research supported by the European Research Council under the H2020 Advanced Grant no. 787845 (OCAL).\\
F. Micheli, A. Tsiamis and J. Lygeros are with the Automatic Control Laboratory in the Department of Information Technology and Electrical Engineering, ETH Z\"{u}rich, Switzerland. Emails: \texttt{\{frmicheli, atsiamis, jlygeros\}@ethz.ch}. }
}

\title{Data-Driven Distributionally Robust System Level Synthesis} 
\maketitle
\begin{abstract}
We present a novel approach for the control of uncertain, linear time-invariant systems, which are perturbed by potentially unbounded, additive disturbances.
We propose a \emph{doubly robust} data-driven state-feedback controller to ensure reliable performance against both model mismatch and disturbance distribution uncertainty. 
Our controller, which leverages the System Level Synthesis parameterization, is designed as the solution to a distributionally robust finite-horizon optimal control problem. The goal is to minimize a cost function while satisfying constraints against the worst-case realization of the uncertainty, which is quantified using distributional ambiguity sets. The latter are defined as balls in the Wasserstein metric centered on the predictive empirical distribution computed from a set of collected trajectory data. 
By harnessing techniques from robust control and distributionally robust optimization, we characterize the distributional shift between the predictive and the actual closed-loop distributions, and highlight its dependency on the model mismatch and the uncertainty about the disturbance distribution.
We also provide bounds on the number of samples required to achieve a desired confidence level and propose a tractable approximate formulation for the doubly robust data-driven controller. To demonstrate the effectiveness of our approach, we present a numerical example showcasing the performance of the proposed algorithm.
\end{abstract}
\begin{IEEEkeywords}
Distributionally Robust Control, Robust Control, Uncertain Systems, Predictive Control for Linear Systems.
\end{IEEEkeywords}

	
\section{Introduction}

Dealing with uncertainty is a fundamental challenge in many control applications. Oftentimes, the dynamics of the system and the distribution of the disturbance acting on it are unknown and should be accounted for. Robust and stochastic approaches have been developed in the last two decades
to specifically address both types of uncertainty~\cite{kouvaritakis2015developments}. Robust methods~\cite{bemporad2007robust,mayne2000constrained} assume bounded uncertainties and solve a worst-case optimization problem to provide guarantees against any possible realization of the uncertainty. Formulations have been developed to account for uncertainties in both the model and the realization of the disturbance~\cite{munoz2015robust}. However, since robust approaches account for \textit{all} possible realizations of the uncertainties, they neglect any available distributional information, leading to conservative control policies.

Stochastic methods~\cite{mesbah2016stochastic} can reduce this conservatism by imposing constraints that must be satisfied with a certain probability. However, analytical solutions in the stochastic setting can be obtained only under specific assumptions about the distribution of the uncertainty~\cite{cannon2009probabilistic,cannon2010stochastic}.
Alternatively, randomized methods such as the sample average approximation~\cite{kleywegt2002sample} and the scenario approach~\cite{calafiore2006scenario} can be used to reformulate the stochastic problem into large, but finite-dimensional, deterministic optimization problems. These methods can handle generic distributions and can be applied in the presence of uncertainty in both the dynamics and in the disturbance, as long as these distributions are accessible through sampling~\cite{micheli2022scenario,calafiore2012robust}. However, sampling-based approaches require a large amount of data to provide tight probabilistic guarantees. 

Following recent developements in the Distributionally Robust (DR) optimization literature \cite{gao2023distributionally, blanchet2022optimal, delage2010distributionally}, controllers based on DR formulations have been designed to blend the properties of robust and stochastic approaches. Similarly to the stochastic approach, the realizations of the disturbance come from a distribution. However, the distribution is uncertain and is allowed to vary within an ambiguity set; the goal is to optimize the controller performance against the worst-case distribution.
The DR approach has been applied in the control setting mainly to provide robustness against additive uncertainties~\cite{van2015distributionally,coppens2021data,li2021distributionally,taskesen2024distributionally,mark2020stochastic,zhong2021data,zolanvari2021data,aolaritei2023wasserstein,mcallister2023distributionally,hakobyan2024wasserstein,brouillon2023distributionally}. Typically, current approaches require having access to the \emph{true} model of the systems' dynamics.

Departing from prior work, we address the situation where an \emph{approximate model} of the dynamics is given, e.g., it can be obtained from some identification procedure. We are also provided with a limited amount of historical input-state \emph{trajectory data}, which are collected based on possibly closed-loop experiments, and can be used to characterize the disturbance distribution. 
A major challenge in this setting is that the model mismatch i) leads to erroneous predictions of the system evolution, and ii) makes designing a feedback controller harder. In addition, it also iii) affects our ability to recover the true disturbances from input-state trajectory data. On top of that, we only have access to a finite number of data.
These factors will inevitably induce a significant \emph{distribution shift} between the predicted and the actual closed-loop control performance. To address this issue,~\cite{micheli2022data} proposed an open-loop data-driven DR model predictive control formulation that robustly handles uncertainty in both the dynamics and the additive disturbance. Instead, here we study the closed-loop finite-horizon setting with state feedback and probabilistic state and input constraints. Note that the closed-loop setting is more challenging since it induces additional distribution shifts due to the model uncertainty, which is a well known problem in data-driven control~\cite{gevers2005identification}.

The contributions of this paper are the following.

\noindent
\textbf{Closed-loop DR formulation:} We propose a new distributionally robust data-driven state-feedback controller that is robust against model mismatches and uncertainty in the disturbance distribution. Given a set of input-state data collected from the system and a nominal model of the dynamics, we build the empirical predictive closed-loop distribution of states and inputs for the class of state-feedback controllers. Using the Wasserstein metric, we define an ambiguity set centered around the empirical predictive distribution. Utilizing tools from robust System Level Synthesis (SLS) and DR optimization, we pose the controller design problem as a stochastic optimization problem with respect to the worst-case probability distribution within the ambiguity set. 
Unlike typical DR optimization settings, where the disturbance is unaffected by the decisions, the presence of feedback changes the statistics of the closed-loop input and state distribution. As a result, we need to allow both the center and the radius of the ambiguity set to depend on the decision variables, i.e., the SLS parameters.

\noindent
\textbf{Distribution shift characterization:} We characterize the distributional shift between the predictive and the actual closed-loop distributions by upper bounding their Wasserstein distance. Hence, by carefully selecting the radius of the ambiguity set, we guarantee that the DR controller is robust against the actual closed-loop distribution with a prescribed confidence level.

\noindent
\textbf{Doubly robust solution:} Using robust SLS and DR optimization techniques we derive a tractable Linear Programming formulation for the DR optimization problem for piece-wise affine cost and constraint functions. We name it \emph{doubly robust} to highlight its ability to handle model mismatches and small sample sizes. To demonstrate the effectiveness of our approach, we present a numerical example showcasing the performance of the proposed controller.

The rest of the paper is organized as follows. In Section~\ref{Sec:Problem_Formulation} we define the problem setting and introduce the SLS formalism. In Section~\ref{Sec:SAAproblem} we derive the Sample Average Approximation and in Section~\ref{Sec:DR_Formulation} we formally state the distributionally robust control problem. Section~\ref{Sec:Shift} analyzes the distributional shift and Section~\ref{Sec:Reformulation} derives a tractable problem formulation for the class of piece-wise linear convex cost and constraint functions. Section~\ref{Sec:Extensions} describes how to extend the proposed framework to handle arbitrary initial conditions and an affine SLS parametrization. In Section~\ref{Sec:Numerics} we provide a numerical example that showcases the effectiveness of the proposed algorithm in a range of scenarios. Section~\ref{Sec:Conclusion} concludes the paper. 

\subsection{Further related work}
Distributionally robust control under known dynamics has been studied extensively. Van Parys et al. \cite{van2015distributionally} address the control of constrained stochastic linear systems with additive uncertainty under DR chance- and CVaR-constraints with second-order moment specifications. 
The authors in \cite{coppens2021data,li2021distributionally} tackle DR model predictive control formulations under moment-based ambiguity sets for the additive disturbance.
Taskesen et al. \cite{taskesen2024distributionally} address distributionally robust linear quadratic control with unknown noise distributions within Wasserstein ambiguity sets, as a generalization of the classical Linear-Quadratic-Gaussian control problem. Data-driven DR model predictive control formulation with Wasserstein ambiguity sets has been analyzed in \cite{mark2020stochastic,zhong2021data,zolanvari2021data,aolaritei2023wasserstein}. 
McAllister and Esfahani \cite{mcallister2023distributionally} show how the DR model predictive control formulation can recover important closed-loop properties of both robust and stochastic approaches. Hakobyan and Yang \cite{hakobyan2024wasserstein} tackle the partially observable case proposing a characterization of the Wasserstein ambiguity set based on the Gelbrich bound of the Wasserstein distance.

In the case of unknown dynamics, data-driven formulations typically use the data to account for unknown dynamics~\cite{berberich2020data,coulson2021distributionally, pan2023distributionally,yin2021maximum}. The disturbance can be assumed come from a distribution~\cite{coulson2021distributionally, pan2023distributionally,yin2021maximum,furieri2022near} or worst-case~\cite{berberich2020data}.
In~\cite{coulson2021distributionally,pan2023distributionally} data-driven DR formulations were considered. They either require full knowledge of the true disturbance distribution~\cite{pan2023distributionally} or they do not account for the effect of disturbance on the closed-loop trajectories~\cite{coulson2021distributionally}.

Our paper leverages tools from System Level Synthesis (SLS)~\cite{anderson2019system}, a convex parameterization for feedback design. This setting provides similar advantages to the disturbance affine feedback framework of Goulart et al.~\cite{goulart2006optimization} and the input-output parametrization of Furieri et al.~\cite{furieri2019input}. The SLS framework also allows for efficient robust control design under model uncertainty, e.g., using ideas from small-gain theory~\cite{anderson2019system,dean2020sample,chen2024robust}. Brouillon et al.~\cite{brouillon2023distributionally} consider a DR controller design using the SLS framework, but they require full knowledge of the true dynamics.

\subsection{Notation and preliminaries} 
We denote by $\delta_x$ the Dirac distribution at $x$. We denote by $\| \cdot\|$ the vector $\ell_1$-norm; for matrices the same symbol is used to denote the induced $\ell_1$-norm, i.e., $\|A\|=\max _{1 \leq j \leq n}\left(\sum_{i=1}^n\left|a_{i j}\right|\right)$. We define $(\cdot)_+ := \max\{\cdot,0\}$.
Let $z_0,z_1,\dots,z_t$ be any vector-valued sequence. 
By $z_{s:t}\triangleq \begin{bmatrix}
        z_{s}^\top &
        z_{s+1}^\top &
       \cdots &
        z_{t}^\top
    \end{bmatrix}^\top$ we denote the vector of stacked elements from index $s$ to index $t$.
Let $M_1,\dots,M_k$ be any sequence of matrices. Then, $\blkdiag(M_1,\dots,M_k)$ denotes the block diagonal matrix formed by $M_1,\dots,M_k$.

For a random variable $\omega\in\Omega\subseteq\mathbb{R}^r$ with distribution $\mathbb{P}_{\omega}$ and a function $\psi:\mathbb{R}^r\rightarrow\mathbb{R}$, the Conditional Value-at-Risk CVaR of level $\beta$ is defined as
\begin{equation*} 
    \text{CVaR}_{1-\beta}^{\omega\sim\mathbb{P}_{\omega}}\left(\psi(\omega)\right):= \inf_{t \in \mathbb{R}} \left[{\beta}^{-1}\, \mathbb{E}^{\omega\sim\mathbb{P}_{\omega}}\left[\left(\psi(\omega) +t\right)_{+}\right]-t\right]\ .
\end{equation*}
    
The Wasserstein metric~\cite{kantorovich1958space,mohajerin2018data} quantifies the minimum cost required to transform one distribution into another. 
    Consider distributions $\mathbb{Q}_1,\mathbb{Q}_2 \in \mathcal{M}(\mathcal{Y})$, where $\mathcal{M}(\mathcal{Y})$ is the set of all probability distributions $\mathbb{Q}$ supported on $\mathcal{Y}$ such that $\mathbb{E}^{\mathbb{Q}}\left[\| \bs{x} \|\right] < \infty $. The Wasserstein metric $d_{\mathrm{W}} : \mathcal{M}(\mathcal{Y}) \times \mathcal{M}(\mathcal{Y}) \rightarrow \mathbb{R}_{\geq 0}$ defines the distance between the distributions $\mathbb{Q}_1$ and $\mathbb{Q}_2$ as
    \begin{equation}\label{eq:Wasserstein_distance}
            d_{W}\left(\mathbb{Q}_{1}, \mathbb{Q}_{2}\right):=\inf_{\Pi} \left\{\int_{\mathcal{Y}^{2}}\left\|\bs{x}_{1}-\bs{x}_{2}\right\| \Pi\left(\mathrm{d} \bs{x}_{1}, \mathrm{d} \bs{x}_{2}\right)\right\},
    \end{equation}
    where $\Pi$ takes values in the set of joint distributions of $\bs{x}_1$ and $\bs{x}_2$ with marginals $\mathbb{Q}_1$ and $\mathbb{Q}_2$.

\section{Problem formulation}\label{Sec:Problem_Formulation}
Consider a discrete-time linear time-invariant (LTI) system
\begin{equation*}\label{eq:Dynamics}
    x_{k+1}={A} x_{k} + {B} u_{k} + w_{k}\ ,
\end{equation*}
with (fully measurable) state $x_k \in \mathbb{R}^{n}$, control input $u_k \in \mathbb{R}^{m}$.
The system is affected by the additive disturbance $w_k \in \mathbb{R}^{n}$ distributed according to some unknown probability distribution $\mathbb{P}_{w}$ defined over 
the unknown and possibly unbounded support set ${\mathcal{W}} \subseteq \mathbb{R}^{n}$. 
We are interested in a finite-horizon optimal control problem over some horizon $T$. For this reason,
we introduce the following batch notation
\begin{equation}\label{eq:dynamics_matrix_form}
    \bs{x}=Z\mathcal{A} \bs{x} + Z \mathcal{B} \bs{u} + \bs{w},
\end{equation}
where we concatenate the states, inputs, and disturbances of the system into stacked vectors
\begin{equation*}
\bs{x}\triangleqq
        x_{0:T}
 , \ \bs{u}\triangleqq
       u_{0:T}
, \ \bsw\triangleqq \begin{bmatrix}
        x_0 \\
      w_{0:T-1}
    \end{bmatrix}.
\end{equation*}
The batch system matrices are defined similarly
\begin{align*}
        \mathcal{A}\triangleqq \blkdiag\underbrace{(A,\dots,A)}_{T\!+\!1\text{ times}},\ \mathcal{B}\triangleqq \blkdiag\underbrace{(B,\dots,B)}_{T\!+\!1\text{ times}}.
\end{align*}
Finally, matrix $Z$ is the block-downshift operator: 
\begin{align*}
        Z\triangleqq \begin{bmatrix}
            0_{n\! \times\! n} &  &  &\\
            I_{n\! \times\! n} & \ddots & &\\
            &  \ddots& \ddots & \\
            &  &  I_{n\! \times\! n} & 0_{n\! \times\! n}
        \end{bmatrix},
\end{align*}
which maps $x_{0:T}$ to its delayed version $\begin{bmatrix}
    0& x^\top_{0:T-1}
\end{bmatrix}^\top$. It consists of $T\!+\!1 \times T\!+\!1$ blocks.

Our objective is to design a policy $\pi$ that minimizes a cost function while satisfying certain specifications.  
Here, we focus on causal linear state-feedback policies $\pi$ of the form 
\begin{equation*}
\pi:\:u_k = 
\sum_{t=0}^{k} K_{k,t} \ x_t\ , \quad k=0,\dots,T\ ,
\end{equation*}
or in vector form
\begin{equation}\label{eq:state_feedback_policy}
\bs{u} = \mathcal{K} \bs{x}\ ,
\end{equation}
where $\mathcal{K}$ is a block-triangular feedback matrix
\begin{equation*}
\mathcal{K}=\left[\begin{array}{cccc}
K_{0,0} & & & \\
K_{1,0} & K_{1,1} & & \\
\vdots & & \ddots & \\
K_{T,0} & \cdots & & K_{T,T}
\end{array}\right].
\end{equation*}
We could extend this setup to include an affine term. We can treat it similarly to the feedback term for the initial state, more details in Section~\ref{Sec:Extensions}.

Due to the stochastic nature of the state-input sequences, the control design is posed as a stochastic optimal control problem.
Define $\bs{y} \triangleqq \begin{bmatrix} \bs{x}^\top&\bs{u}^\top\end{bmatrix}^\top\in\mathbb{R}^{(T\!+\!1)n+Tm}$ and denote with $\Pcl$ the closed-loop distribution of $\bs{y}$ induced by the dynamics $\mathcal{M} \triangleqq [\mathcal{A}\ \mathcal{B}]$ under the policy $\pi$.

Now, we can formulate the following finite horizon stochastic optimal control problem.
\begin{equation}\label{eq:original_problem_y}
    \begin{aligned}
        {J}(\pi) \triangleqq \min_{\pi} \quad & \mathbb{E}^{\bs{y}\sim \Pcl} \left[ h\left(\bs{y}\right) \right]\\
        \text{s.t.}	\quad & \text{CVaR}_{1-\beta}^{\bs{y}\sim \Pcl} \left[g(\bs{y})\right] \leq 0 \\
    \end{aligned}\ ,
\end{equation}
where $h:\mathbb{R}^{(T\!+\!1)(m+n)}\rightarrow \mathbb{R}^+$
is the objective function, and $g:\mathbb{R}^{n+T(m+n)}\rightarrow \mathbb{R}$ the constraint function.

The distribution of the disturbance $\bs{w}$ and the system dynamics are both assumed to be uncertain, which makes solving~\eqref{eq:original_problem_y} challenging. Instead, we assume that we have access to trajectory data generated by system~\eqref{eq:dynamics_matrix_form}. 
\begin{Assumption}[Data collection]\label{Ass:data}
We have access to trajectory data in the form of a dataset $\mathcal{D}^{N,T\!+\!1}$, comprising $N$ independent $T \! + \! 1$-step state-input trajectories $\{
    \bs{x}^i,\bs{u}^i
\}$, $i=1,\dots,N$, that have been collected by applying inputs $\bs{u}^i$ of length $T$ to the system~\eqref{eq:dynamics_matrix_form}. The corresponding initial conditions and 
disturbances
are denoted by \[\bs{w}^i\triangleqq \begin{bmatrix}
    x^i_0\\w^i_{0:T-1}
\end{bmatrix}. \]
The initial condition is deterministic and fixed $x_0^i=x_0$, $i=1,\dots,N$ and the same for the trajectory collection phase and the controller deployment phase. 
\end{Assumption}
The assumption of fixed $x_0$ is for streamlining the presentation.
We can relax this requirement and allow $x_0$ to vary, see Section~\ref{Sec:Extensions}.
We further assume that the true system matrices $A$ and $B$ are unknown, but known to lie within a ball around some nominal system matrices $\hat{A}$ and $\hat{B}$ 
    \begin{equation}\label{eq:uncertainty_bounds_on_matrices}
    	\begin{aligned}
    		\|\Delta {A}\| \triangleqq \|\hat{{A}}-{A}\| \leq e_A\\
			\|\Delta {B}\| \triangleqq \|\hat{{B}}-{B}\| \leq e_B,
    	\end{aligned}
    \end{equation}
    for some $e_A, e_B>0$, where we recall that $\|\cdot \|$ denotes the $\ell_1$ induced norm. The nominal system dynamics $\hat{\mathcal{M}}=\begin{bmatrix}
    	\hat{\mathcal{A}}& \hat{\mathcal{B}}
    \end{bmatrix}$, and thus the model error $\Delta\mathcal{M}\triangleqq\begin{bmatrix}
    \Delta \mathcal{A}& \Delta \mathcal{B}
\end{bmatrix}$ have the same block diagonal structure as the true $\mathcal{M}$.
For clarity of exposition we are assuming here that this bound is deterministic.
\begin{Remark}[Role of data]
    In this paper, we use the state-input data to obtain (approximate) disturbance samples (see~\eqref{eq:nominal_empirical_distribution_of_the_disturbances}). To simplify the presentation, the nominal model $\hat{\mathcal{M}}$ and the bounds $\epsilon_A,\epsilon_B$ are assumed to be given a priori. However, we could use the same data to also perform system identification and obtain such a nominal estimate $\hat{\mathcal{M}}$~\cite{dean2020sample,simchowitz2018learning}.  In such a case, the uncertainty bounds~\eqref{eq:uncertainty_bounds_on_matrices} would hold in a probabilistic sense. The results of this paper can be extended to this setting.
\end{Remark}

\subsection{System Level Synthesis}\label{Subsec:SLS}
Under~\eqref{eq:state_feedback_policy}, the dynamics~\eqref{eq:dynamics_matrix_form} can be rewritten as
\begin{equation}\label{eq:closed-loop_responses}
\begin{aligned}
    \bs{y} &= \xu=
    \begin{bmatrix}
        \left( I - Z \left( \mathcal{A} + \mathcal{B}\mathcal{K} \right) \right)^{-1} \\
        \mathcal{K} \left( I - Z \left( \mathcal{A} + \mathcal{B}\mathcal{K} \right) \right)^{-1}
    \end{bmatrix} \bs{w}
\end{aligned}.
\end{equation}
Note that optimizing over the linear gains $\mathcal{K}$ in~\eqref{eq:original_problem_y} is a non-convex problem in general. 
To deal with this issue, we adopt the SLS framework.

Following the SLS formalism~\cite{anderson2019system}, we define the closed-loop \textit{system response matrices} $\Phi_x\in\mathbb{R}^{n(T\!+\!1)\times n(T\!+\!1)}$ and $\Phi_u\in\mathbb{R}^{m(T\!+\!1)\times n(T\!+\!1)}$, as the matrices that map the disturbance $\bsw$ to the state $\bs{x}$ and control inputs $\bs{u}$ respectively
\begin{equation}\label{eq:phi_dynamics}
    \bs{y} = \xu = \begin{bmatrix}\Phi_x\\\Phi_u\end{bmatrix} \bs{w}= \Phi \bs{w},
\end{equation}
where we define $\Phi\triangleqq\left[\begin{array}{cc}\Phi^\top_x&\Phi^\top_u\end{array}\right]^\top$.
By causality, both maps have block-triangular structure
\begin{equation}\label{eq:causality}
\begin{aligned}
    \Phi_x=\left[\begin{array}{cccc}
\Phi_x^{0,0} & & & \\
\Phi_x^{1,1} & \Phi_x^{1,0} & & \\
\vdots & \ddots & \ddots & \\
\Phi_x^{T, T} & \ldots & \Phi_x^{T, 1} & \Phi_x^{T, 0}
\end{array}\right], \\
\Phi_u=\left[\begin{array}{cccc}
\Phi_u^{0,0} & & & \\
\Phi_u^{1,1} & \Phi_u^{1,0} & & \\
\vdots & \ddots & \ddots & \\
\Phi_u^{T, T} & \ldots & \Phi_u^{T, 1} & \Phi_u^{T, 0}
\end{array}\right] .
\end{aligned}
\end{equation}
The core idea is to re-parameterize policy~\eqref{eq:state_feedback_policy} and perform the controller synthesis directly on the closed-loop system response matrices $\Phi$ that appear in~\eqref{eq:phi_dynamics}, instead of the state feedback map $\mathcal{K}$.
By identifying~\eqref{eq:closed-loop_responses} with~\eqref{eq:phi_dynamics}, the linear feedback parameterization is equivalent to the SLS parameterization  under the transformation
$$\mathcal{K} = \Phi_u \Phi_x^{-1} $$
when the following condition holds~\cite{anderson2019system}
\begin{equation}\label{eq:SLS_dynamics_constraint}
    \left[ I\!-\!Z\mathcal{A} \ -\!Z \mathcal{B}\right]\Phi = I\ .
\end{equation}

Note that the above constraint cannot be enforced explicitly as it requires knowledge of the true system matrices $A$, $B$, which are unknown. Since we only have access to the nominal system matrices $\hat{A}$, $\hat{B}$, we replace constraint~\eqref{eq:SLS_dynamics_constraint} with
\begin{equation}
 \label{eq:SLS_dynamics_constraint_nominal}
	\left[ I-Z\hat{\mathcal{A}} \ -Z\hat{\mathcal{B}}\right]\PHIh =I .
\end{equation}
We account for the model error in Section~\ref{Sec:DR_Formulation}.

As a result, we can reformulate the finite-horizon stochastic optimal control Problem~\eqref{eq:original_problem_y}  as:
\begin{equation}\label{eq:original_problem_phi}
    \begin{aligned}
	{J}(\pi)  \triangleqq \min_{\Phi} \quad & \mathbb{E}^{\bs{y}\sim\Pcl} \left[ h\left(\bs{y}\right) \right]\\
        \text{s.t.}	\quad & \eqref{eq:causality}, \ \eqref{eq:SLS_dynamics_constraint_nominal},  \\
	& \text{CVaR}_{1-\beta}^{\bs{y}\sim \Pcl} \left[g(\bs{y})\right] \leq 0 \\
	\end{aligned}\ .
\end{equation}

\subsection{Sample Average Approximation}\label{Sec:SAAproblem} 
Without any robustness considerations, an effective approach to solving problem~\eqref{eq:original_problem_phi} is to replace the expectations with the nominal empirical means. This is also known as Sample Average Approximation (SAA)~\cite{kleywegt2002sample}.
Given $N$ trajectory samples and the nominal model $\hat{\mathcal{M}}$, we can construct a nominal empirical distribution for the disturbances
\begin{equation}\label{eq:nominal_empirical_distribution_of_the_disturbances}
    \begin{aligned}
       & \Pwhat \triangleqq \frac{1}{N} \sum_{i=1}^{N} \delta_{\hat{\bs{w}}^{i}}\ ,\\
       &\hat{\bs{w}}^i = \bs{x}^i - Z \hat{\mathcal{A}} \bs{x}^i - Z \hat{\mathcal{B}} \bs{u}^i\ , \ \text{ for }\ i = 1,\dots, N,
    \end{aligned}
\end{equation}
where we use $\bar{\Prob}$ to denote empirical distributions. 

Having access to disturbance samples we can simulate the closed-loop performance of a state-feedback policy $\pi:\:\hatK=\Phi_u \Phi_x^{-1} $, using the nominal dynamics $\hat{\mathcal{M}}$. Define the \textit{empirical predictive} distribution of inputs and states as
\begin{equation}\label{eq:predictive_empirical_distribution}
\Pemp = \frac{1}{N} \sum_{i=1}^{N} \delta_{\hat{\bs{y}}^{i}},
\end{equation}
where, owing to~\eqref{eq:SLS_dynamics_constraint_nominal}, the $i-$th prediction is
\begin{equation}\label{eq:prediction_y_i}
    \hat{\bs{y}}^{i} \triangleqq \PHIh \hat{\bs{w}}^i, \quad i= 1,\dots, N\ .
\end{equation}
The superscript $\hat{\mathcal{M}}$ denotes that we are relying on the nominal model to construct the empirical distribution of the disturbances in~\eqref{eq:nominal_empirical_distribution_of_the_disturbances} and for the system forward simulation in~\eqref{eq:predictive_empirical_distribution}, while the subscript $\pi$ denotes the state-feedback policy~\eqref{eq:state_feedback_policy} induced by the feedback matrix $\mathcal{K}$.

Following the SAA approach, we optimize the empirical predicted performance, by taking the expectations in the cost and in the constraint with respect to the nominal empirical predictive distribution $\Pemp$.
\begin{equation}\label{eq:problem_SAA}
    \begin{aligned}
        {J}^{SAA}(\pi) \triangleqq \min_{\Phi} \quad & \mathbb{E}^{\bs{y}\sim \Pemp }\left[ h\left(\bs{y}\right) \right] \\
        \text{s.t.}	\quad & \eqref{eq:causality}, \ \eqref{eq:SLS_dynamics_constraint_nominal},  \\
        & \text{CVaR}_{1-\beta}^{\bs{y}\sim \Pemp} \left[ g \left( \bs{y} \right) \right] \leq 0 \\
    \end{aligned}\ .
\end{equation}
The SAA formulation has the advantage of being tractable and having strong asymptotic performance guarantees.
However, in the presence of model mismatch and when the number of samples $N$ is small, the SAA approach can overfit to the wrong model $(\hat{\mathcal{A}},\ \hat{\mathcal{B}})$ and the samples $\bs{x}^i,\,\bs{u}^i$, for $i\le N$, leading to optimistically biased solutions, which is referred to as the \textit{optimizer's curse} in the optimization literature~\cite{smith2006optimizer}. This can result in a large discrepancy between the in-sample predicted performance and the out-of-sample closed-loop performance. This \emph{distribution shift} is a consequence of the fact that, while approximating the true closed-loop distribution $\Pcl$ by computing the nominal empirical predictive distribution $\Pemp$ as in~\eqref{eq:predictive_empirical_distribution}, we are wrongfully assuming that i) the nominal model $\hat{\mathcal{M}}$ is an accurate representation of the true unknown model $\mathcal{M}$ and ii) the empirical distribution of the disturbance $\Pwhat$ obtained from $N$ samples is an accurate representation of the true disturbance distribution $\Pw$. 

\subsection{Distributionally robust formulation}\label{Sec:DR_Formulation}	
In practice, the nominal empirical predictive distribution $\Pemp$ will inevitably differ from the true closed-loop distribution $\Pcl$ due to model mismatch and the limited number of samples available. To account for this distribution shift, we follow a distributionally robust approach. We robustify problem~\eqref{eq:problem_SAA} against uncertainty in the predictive distribution, by optimizing over the worst-case expectation within a set of probability distributions, which we refer to as an \textit{ambiguity set}. 

In this paper, we consider ambiguity sets constructed using the Wasserstein metric. 
The Wasserstein metric is a popular choice for defining ambiguity sets as it can handle distributions with arbitrary supports, including finitely supported ones, making it computationally tractable for data-driven applications. Other types of ambiguity sets, such as those based on the Kullback-Leibler divergence or the total variation distance, may have drawbacks such as not being defined for distributions with different supports.
Using the Wasserstein metric, an ambiguity set of radius $\varepsilon>0$ around a probability measure $\Prob$ can be defined as
\begin{equation}
\mathcal{B}^{\varepsilon}\left(\Prob\right) \triangleqq \left\{\mathbb{Q}\in \mathcal{M}(\mathcal{Y})\middle| d_{W}\!\left(\Prob,\mathbb{Q}\right)\leq \varepsilon \right\},
\end{equation}
where the Wasserstein distance $d_{W}(\cdot,\cdot)$ is defined in~\eqref{eq:Wasserstein_distance}.
Here, we consider ambiguity sets  $\mathcal{B}^{\varepsilon}(\Pemp)$, centered around the empirical predictive distribution $\Pemp$ with radius $\varepsilon$. Typically, in DR optimization, the radius $\varepsilon$ is a constant and usually treated as a design parameter. In contrast, here we allow $\varepsilon$ to depend on the optimization variables, that is $\varepsilon\triangleqq\varepsilon(\PHI).$ Hence, both the ambiguity set center and its radius depend on the closed-loop responses $\PHI$. 
By appropriately choosing the function $\varepsilon(\Phi)$ we can ensure that the ambiguity set contains the true closed-loop distribution, i.e.
\begin{equation}\label{eq:closed_loop_distribution_included}
\Pcl\in \mathcal{B}^{\varepsilon}\left( \Pemp\right).
\end{equation} This would not be possible with a constant radius.

A DR version of the finite-horizon stochastic optimization problem~\eqref{eq:original_problem_phi} can now be written as
\begin{equation}\label{eq:DR_problem}
    \begin{aligned}
            {J}^{DR} \triangleqq& \min_{\PHI}  &&\sup_{\mathbb{Q} \in 
 \mathcal{B}^{\varepsilon(\PHI)}\left(\Pemp\right)} \mathbb{E}^{\bs{y}\sim\mathbb{Q}}\left[ h\left(\bs{y}\right) \right]\\
         &\text{s.t.}	\quad && \eqref{eq:causality}, \ \eqref{eq:SLS_dynamics_constraint_nominal},  \\
            &&&\sup_{\mathbb{Q} \in \mathcal{B}^{\varepsilon(\PHI)}\left(\Pemp\right)}\text{CVaR}_{1-\beta}^{\bs{y}\sim\mathbb{Q}}\left(g(\bs{y})\right) \leq 0, 
    \end{aligned}
\end{equation}

In Section~\ref{Sec:Shift}, we characterize the distribution shift $d_{W}(\Pemp, \Pcl)$ as a function of the optimization variable $\PHI$ and provide potential candidate functions for $\varepsilon(\PHI)$ so that~\eqref{eq:closed_loop_distribution_included} is satisfied. In this case, solving~\eqref{eq:DR_problem} will provide a control policy that is robust against all the distributions contained in the ambiguity set, including the true (unknown) closed-loop one.
Informally, this will allow us to claim that if the distributionally robust problem~\eqref{eq:DR_problem} is feasible and its minimizer $\pi^{DR}$, attains a cost $J^{DR}(\pi^{DR})$, then, with high confidence, $\pi^{DR}$ is a feasible solution for the original Problem~\eqref{eq:original_problem_y} and the resulting cost ${J}(\pi^{DR})$ is upper bounded by the computed $J^{DR}(\pi^{DR})$.

As is common in robust optimization, the worst-case in the cost and in the constraint are formulated independently, possibly introducing conservatism as the optimization problem optimizes against two separate worst-case distributions. 

To characterize the distribution shift, we require a technical assumption on the distribution of the multi-step disturbance vector $\bs{w}$.
\begin{Assumption}[Light-tail assumption]\label{Ass:Light_Tail}
    For some constants $\mathsf{a}>1$ and $\mathsf{b}>0$
    \begin{equation*}
            \mathscr{E}_{\mathsf{a},\mathsf{b}} = \mathbb{E}\left[e^{\mathsf{b} \|{\bs{w}}\|^\mathsf{a}}\right]< +\infty\ .
    \end{equation*}
\end{Assumption}
\noindent
This assumption is a condition on the decay rate of the tail of the probability distribution $\Pw$ and is satisfied when $\bs{w}$ is sub-Gaussian or when $\mathcal{W}$ is compact. It is required to obtain the finite-sample concentration bound of Lemma~\ref{Lemma:fournier} below.

Even with an accurate radius $\varepsilon$, we still need to solve~\eqref{eq:DR_problem}. This requires reformulating~\eqref{eq:DR_problem} in a way that makes the solution computationally practical. The main difficulty is that the ambiguity set depends on the optimization variables $\Phi_x$ and $\Phi_u$, as well as the model uncertainty $\Delta\!\mathcal{M}$. In Section~\ref{Sec:Reformulation}, we employ techniques inspired by robust control (small-gain theory) to obtain such a reformulation.


\section{Characterization of the distribution shift}\label{Sec:Shift}
In this section, we upper bound the Wasserstein distance $d_{W}\left(\Pemp, \Pcl\right)$ between the predictive empirical distribution and the actual closed-loop one. Since the empirical distribution is a random quantity, we can only provide an upper bound that holds with high probability. 

Let us first characterize the actual closed-loop distribution $\Pcl$ under policy~\eqref{eq:state_feedback_policy}. 
Note that the nominal system responses $\Phi$ in~\eqref{eq:problem_SAA},~\eqref{eq:DR_problem} satisfy the affine constraint~\eqref{eq:SLS_dynamics_constraint_nominal} for the inaccurate nominal dynamics $\mathcal{\hat M}$ instead of true model $\mathcal{M}$. 

Following the steps of Section~2.3 in~\cite{anderson2019system}, we can provide an exact expression for the effect of model mismatch. In particular, the state-feedback $\mathcal{K} = \Phi_u \Phi_x^{-1}$ induces the following state-input closed-loop distribution $\bs{y}_{\mathrm{cl}} \sim \Pcl$:
\begin{equation}\label{eq:closed-loop_distribution_actual}
		\bs{y}_{\mathrm{cl}} 
		= \left(I+\Phi Z\Delta\!\mathcal{M}\right)^{-1}\Phi \bs{w}= \resolvent \Phi \bs{w}\ ,\footnote{Note that by the inversion lemma, we can also write $R_\Phi \Phi=\Phi(I+Z\Delta\mathcal{M}\Phi)^{-1}$. The latter expression is the one derived in~\cite{anderson2019system}.}
\end{equation}
where $ \resolvent = \left(I+\Phi Z\Delta\!\mathcal{M}\right)^{-1}$. 
Due to the lower block-triangular structure (consequence of the causality requirement of the controller $\hatK$) the inverse always exists. 

Comparing~\eqref{eq:closed-loop_distribution_actual} to~\eqref{eq:predictive_empirical_distribution}, there are two sources of distribution shift: i) the model mismatch $\Delta\!\mathcal{M}$, and ii) the disturbance distribution uncertainty as we only have a finite number of samples. This distinction becomes transparent by leveraging the triangle inequality, leading to 
\begin{equation}\label{eq:Triangle_inequality}
    d_{W}\!\left(\Pemp\!, \Pcl\right) \!\leq \underbrace{d_{W}\!\left(\Pemp\!, \Pclemp\right)}_{\text{model mismatch}} \ + \underbrace{d_{W}\!\left(\Pclemp\!, \Pcl\right)}_{\substack{\text{disturbance distribution} \\ \text{uncertainty}}}\!,
\end{equation}
where $\Pclemp$ denotes the empirical (finite-sample) version of the true closed-loop distribution $\Pcl$,
\begin{equation}\label{eq:empirical_closed_loop_distribution}
	\Pclemp = \frac{1}{N} \sum_{i=1}^{N} \delta_{\bs{y}_{\mathrm{cl}}^{i}}\ ,
\end{equation}
with $$\yclemp \triangleqq \resolvent \Phi \wtremp\ $$ and 
\begin{equation}
 {\bs{w}}^i = \bs{x}^i - Z {\mathcal{A}} \bs{x}^i - Z {\mathcal{B}} \bs{u}^i\ ,
\end{equation}
is the true (unknown) disturbance that affected the sampling of the input-state trajectories $\bs{x}^i$, $\bs{u}^i$, of dataset $\mathcal{D}^{N,T}$.
We define $$ \Pwbar:= \frac{1}{N} \sum_{i=1}^{N} \delta_{{\bs{w}}^{i}}\ .$$ 
This distribution could have been obtained using the true (unknown) dynamics $\mathcal{M}$, however, since we do not know the true model $\mathcal{M}$, we do not have access to the empirical closed-loop distribution $\Pclemp$; we only use it as an intermediate quantity for controlling the distribution shift induced by the closed-loop policy.

\begin{Theorem}[Distribution Shift]\label{Thm:tri_ineq}
The distance between the predictive empirical distribution and the closed-loop distribution is upper-bounded by:
    \begin{equation*}
    	\scalemath{0.9}{\begin{aligned}
    		&d_{W}\!\left(\Pemp, \Pcl\right) \leq \\
    		&\frac{1}{N}\sum_{i=1}^N \left\| \resolvent \Phi Z \Delta\!\mathcal{M} \left(\PHIh \wemp -\xui \right) \right\|  + \left\| \resolvent \Phi Z \right\| d_{W}\!\left(\Pwbar, \Pw\right) .
    	\end{aligned}}
    \end{equation*}
\end{Theorem}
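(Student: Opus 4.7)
The plan is to start from the triangle inequality~\eqref{eq:Triangle_inequality} already displayed in the excerpt and bound its two summands separately.

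For the model-mismatch term $d_W(\Pemp,\Pclemp)$, both measures are empirical with $N$ atoms and equal weights, so the product coupling $\tfrac{1}{N}\sum_i \delta_{(\yemp,\yclemp)}$ is admissible in~\eqref{eq:Wasserstein_distance} and yields
\[
d_W(\Pemp,\Pclemp)\;\le\;\tfrac{1}{N}\sum_{i=1}^N\|\yemp-\yclemp\|.
\]
The key algebraic step is to rewrite $\yemp-\yclemp$ in the form appearing in the theorem. I use $\yemp=\PHI\wemp$, $\yclemp=\resolvent\PHI\wtremp$, the identity $\resolvent^{-1}=I+\PHI Z\Delta\!\mathcal{M}$, and the direct computation $\wemp-\wtremp=-Z\Delta\!\mathcal{M}\xui$ obtained by subtracting the definitions of $\wemp$ and $\wtremp$. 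Factoring $\resolvent$ out on the left,
\[
\yemp-\yclemp
=\resolvent\bigl((I+\PHI Z\Delta\!\mathcal{M})\PHI\wemp-\PHI\wtremp\bigr)
=\resolvent\PHI Z\Delta\!\mathcal{M}(\PHI\wemp-\xui),
\]
which is exactly the first summand in the claim.

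For the distributional-uncertainty term $d_W(\Pclemp,\Pcl)$, both measures are pushforwards of $\Pwbar$ and $\Pw$ under the common linear map $T(\bs{w})=\resolvent\PHI\bs{w}$. Pushing any coupling $\pi_0$ of $(\Pwbar,\Pw)$ through $T\otimes T$ produces a coupling of $(\Pclemp,\Pcl)$, so $d_W(\Pclemp,\Pcl)\le\int\|T(\bs{w}_1)-T(\bs{w}_2)\|\,d\pi_0$. The subtle point is extracting the factor $Z$ in the effective operator norm. Under Assumption~\ref{Ass:data}, $x_0$ is deterministic and identical across all samples and under the true law, so every admissible coupling of $\Pwbar$ and $\Pw$ can be taken to satisfy $\pi_0$-a.s.\ that the first $n$-block of $\bs{w}_1$ and $\bs{w}_2$ both equal $x_0$. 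Any such coupled difference $\bs{w}_1-\bs{w}_2$ has vanishing first block and can be written as $Z\bs{d}$ for $\bs{d}$ obtained by shifting the remaining blocks up and padding a zero block at the bottom, which gives $\|\bs{d}\|=\|\bs{w}_1-\bs{w}_2\|$ in $\ell_1$. Hence $\|T(\bs{w}_1-\bs{w}_2)\|=\|\resolvent\PHI Z\bs{d}\|\le\|\resolvent\PHI Z\|\,\|\bs{w}_1-\bs{w}_2\|$, and taking the infimum over couplings yields $d_W(\Pclemp,\Pcl)\le\|\resolvent\PHI Z\|\,d_W(\Pwbar,\Pw)$.

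Combining the two bounds in~\eqref{eq:Triangle_inequality} gives Theorem~\ref{Thm:tri_ineq}. The main obstacle is the second bound: extracting the factor $Z$ in the Lipschitz constant requires exploiting both the block-downshift structure of $Z$ and the fact that the first block of $\bs{w}$ is deterministic and shared between $\Pwbar$ and $\Pw$; a naive Lipschitz estimate would only yield the looser constant $\|\resolvent\PHI\|$. Once the resolvent identity $\resolvent^{-1}=I+\PHI Z\Delta\!\mathcal{M}$ and the relation $\wemp-\wtremp=-Z\Delta\!\mathcal{M}\xui$ are in hand, the first bound reduces to routine algebra.
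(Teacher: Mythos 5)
Your proof is correct and takes essentially the same route as the paper: the triangle inequality through the intermediate empirical closed-loop distribution $\Pclemp$, the atom-to-atom coupling plus the resolvent identity $\resolvent^{-1}=I+\Phi Z\Delta\!\mathcal{M}$ and $\wemp-\wtremp=-Z\Delta\!\mathcal{M}\xui$ for the model-mismatch term, and the pushforward coupling with the deterministic-first-block/downshift argument to extract the factor $Z$ in the Lipschitz constant for the disturbance term. The only nit is terminological: the coupling $\tfrac{1}{N}\sum_{i}\delta_{(\yemp,\yclemp)}$ you write down is the matched (diagonal) coupling rather than the product coupling, and it is exactly the one the paper uses.
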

\begin{proof}
    The result follows directly by the triangle inequality~\eqref{eq:Triangle_inequality}, Lemma~\ref{Lem:1},  and Lemma~\ref{Lem:2}.
\end{proof}
Theorem~\ref{Thm:tri_ineq} decomposes the distribution shift between the empirical predictive distributions, $\Pemp$, and the true closed-loop distribution, $\Pcl$, into a \textit{model mismatch} and a \textit{disturbance distribution uncertainty} error component. The two technical lemmas bound these two components separately. 

The model mismatch term depends on the product of three terms: i) the closed-loop responses, ii) the model error $\Delta\!\mathcal{M}$, and iii) the distance between the predicted input-state trajectories, $\Phi_x\bs{\hat{w}}^i,\,\Phi_u\bs{\hat{w}}^i$, and the collected input-state trajectories $\bs{x}^i,\,\bs{u}^i $. It goes to zero as the model error approaches zero or it can also be made small if the planned input-state trajectories are close to the collected trajectories. 

\begin{Remark}
 If the data in Assumption~\ref{Ass:data} is collected using some (known or unknown) controller with closed-loop map ${\PHIh}^{\text{old}}$, as often required in safety-critical applications, we can write
\begin{equation*}
    \PHIh \wemp -\xui = \left(\PHIh - {\PHIh}^{\text{old}} \right) \wemp \ ,
\end{equation*}
which highlights that this component can be small when the new controller is close to the one used in the data collection. Note that the right-hand side expression arises implicitly via the data $\bs x^i,\bs u^i$ on the left-hand side. We do not need explicit access to a closed-loop map ${\PHIh}^{\text{old}}$.
This term could be useful in an episodic learning-based control setting~\cite{gevers2005identification},
where the controller is updated across episodes.
\end{Remark}

The second component captures the distributional shift due to the Wasserstein distance $d_{W}\!\left(\Pwbar,\Pw\right)$ between the true distribution and the finite-sample empirical distribution of the disturbance. This component persists under zero model error and goes to zero only if the empirical distribution $\Pwbar$ approaches the true one $\Pw$, i.e., as the number of samples $N$ goes to infinity.

Note that both components depend on the actual closed-loop responses $\resolvent \Phi$. As the model mismatch $\Delta\!\mathcal{M}$ gets smaller, the closed-loop responses get closer to the nominal ones. In the following, we upper-bound every component separately.

\begin{Lemma}[Model mismatch]\label{Lem:1}
The Wasserstein distance between the empirical predictive distribution $\Pemp$ and the empirical closed-loop distribution $\Pclemp$ is upper bounded as:
\begin{equation*}
    \begin{aligned}
            &d_{W}\!\left(\Pemp, \Pclemp \right)
            \leq \frac{1}{N}\sum_{i=1}^N \left\| \resolvent \Phi Z \Delta\!\mathcal{M} \left(\PHIh \wemp -\xui \right) \right\|
\end{aligned}
\end{equation*}
\end{Lemma}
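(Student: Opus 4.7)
The plan is to exploit the fact that both $\Pemp$ and $\Pclemp$ are empirical measures on $N$ atoms with equal weights $1/N$. For such measures, the natural ``index-matching'' coupling that pairs $\hat{\bs{y}}^i$ with $\yclemp$ is a valid joint distribution with the required marginals, so by the definition~\eqref{eq:Wasserstein_distance} of the Wasserstein distance,
\begin{equation*}
d_W\!\left(\Pemp,\Pclemp\right) \le \frac{1}{N}\sum_{i=1}^N \left\| \hat{\bs{y}}^i - \yclemp \right\|.
\end{equation*}
The task then reduces to rewriting the per-sample discrepancy $\hat{\bs{y}}^i-\yclemp$ in the form claimed by the lemma.

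The first key step is to express the true disturbance $\wtremp$ in terms of the nominal one $\wemp$. From the definitions
$\wtremp = \bs{x}^i - Z\mathcal{A}\bs{x}^i - Z\mathcal{B}\bs{u}^i$ and $\wemp = \bs{x}^i - Z\hat{\mathcal{A}}\bs{x}^i - Z\hat{\mathcal{B}}\bs{u}^i$, together with $\Delta\!\mathcal{M}=\hat{\mathcal{M}}-\mathcal{M}$ and the stacked notation, I obtain
\begin{equation*}
\wtremp = \wemp + Z\Delta\!\mathcal{M}\,\xui.
\end{equation*}
Substituting into $\yclemp = \resolvent \Phi \wtremp$ and recalling $\hat{\bs{y}}^i=\PHIh\wemp=\Phi\wemp$,
\begin{equation*}
\hat{\bs{y}}^i - \yclemp = (I - \resolvent)\Phi\wemp - \resolvent \Phi Z\Delta\!\mathcal{M}\,\xui.
\end{equation*}

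The second key step is the resolvent identity $I - (I+X)^{-1} = (I+X)^{-1}X$ applied with $X = \Phi Z\Delta\!\mathcal{M}$, which yields $I - \resolvent = \resolvent \Phi Z\Delta\!\mathcal{M}$. Substituting and factoring $\resolvent \Phi Z\Delta\!\mathcal{M}$ on the left gives
\begin{equation*}
\hat{\bs{y}}^i - \yclemp = \resolvent \Phi Z\Delta\!\mathcal{M}\bigl(\Phi\wemp - \xui\bigr) = \resolvent \Phi Z\Delta\!\mathcal{M}\bigl(\PHIh\wemp - \xui\bigr).
\end{equation*}
Taking norms, applying the triangle inequality to the sum, and combining with the Wasserstein upper bound from the first paragraph yields the claimed inequality.

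I expect the only mildly tricky step to be the resolvent identity and being careful that $\Phi$ solves the \emph{nominal} SLS constraint~\eqref{eq:SLS_dynamics_constraint_nominal}, so that $\Phi\wemp$ is genuinely the predicted trajectory $\hat{\bs y}^i$, whereas the collected trajectory $\xui$ was generated by the \emph{true} dynamics and therefore differs from $\Phi\wemp$ precisely through the model error term. The rest is linear-algebra bookkeeping.
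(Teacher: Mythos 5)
Your proposal is correct and follows essentially the same route as the paper's own proof: the index-matching coupling to bound the Wasserstein distance by the average pairwise norm, the identity $\wtremp = \wemp + Z\Delta\!\mathcal{M}\,\xui$, and the resolvent identity $I-\resolvent = \resolvent\Phi Z\Delta\!\mathcal{M}$ to factor the discrepancy. No gaps.
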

\begin{Lemma}[Disturbance distribution uncertainty]\label{Lem:2}
Let $d_{W}\!\left(\Pwbar, \Pw \right)$ be the Wasserstein distance between the empirical disturbance distribution $\Pwbar$ and the true $\Pw$. The Wasserstein distance between the empirical closed-loop distribution $\Pclemp$ and the true closed-loop distribution $\Pcl$ can be upper bounded as:
\begin{equation*}\scalemath{1}{
		d_{W}\!\left(\Pclemp, \Pcl\right) \leq \left\| \resolvent \Phi Z \right\| d_{W}\!\left(\Pwbar, \Pw\right)}
	\end{equation*}
\end{Lemma}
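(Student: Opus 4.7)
The plan is to exploit the fact that $\Pcl$ and $\Pclemp$ are pushforwards of $\Pw$ and $\Pwbar$ under the \emph{same} linear map $\bs w\mapsto \resolvent \Phi\bs w$ (cf.~\eqref{eq:closed-loop_distribution_actual} and $\yclemp = \resolvent\Phi\wtremp$), so I can transport mass in the $\bs y$-space by transporting it in the $\bs w$-space and then invoking the Lipschitz property of this linear map. The only subtlety is that the bound in the statement features $\|\resolvent \Phi Z\|$ rather than the larger $\|\resolvent\Phi\|$, so the argument must exploit the fact that the first block of $\bs w$ is the deterministic, fixed initial condition $x_0$ (Assumption~\ref{Ass:data}).

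First I would fix any admissible transport plan $\Pi$ on $\mathbb{R}^{n(T+1)}\times \mathbb{R}^{n(T+1)}$ with marginals $\Pwbar$ and $\Pw$. Its pushforward under $(\bs w^{(1)},\bs w^{(2)})\mapsto (\resolvent\Phi\bs w^{(1)},\resolvent\Phi\bs w^{(2)})$ is a coupling of $\Pclemp$ and $\Pcl$, so by the definition~\eqref{eq:Wasserstein_distance}
\[
 d_W(\Pclemp,\Pcl)\ \leq\ \int \|\resolvent\Phi(\bs w^{(1)}-\bs w^{(2)})\|\, \Pi(\mathrm d\bs w^{(1)},\mathrm d\bs w^{(2)}).
\]

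The key step is the replacement of $\resolvent\Phi$ by $\resolvent\Phi Z$. Since both $\Pwbar$ and $\Pw$ assign probability one to vectors whose first block equals $x_0$, any coupling $\Pi$ is supported on pairs with a common first block, hence $\bs w^{(1)}-\bs w^{(2)}$ has a zero first block almost surely. Writing $\tilde v^{(k)}\triangleqq [{w^{(k)}_{0:T-1}}^{\!\top},\,0_n^\top]^\top$ for $k=1,2$, the shift structure of $Z$ yields $\bs w^{(1)}-\bs w^{(2)} = Z(\tilde v^{(1)}-\tilde v^{(2)})$, and moreover $\|\tilde v^{(1)}-\tilde v^{(2)}\| = \|\bs w^{(1)}-\bs w^{(2)}\|$ since the removed entries (the $x_0$-block and the trailing zero padding) contribute nothing to the $\ell_1$-norm. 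Applying the induced $\ell_1$-norm inequality gives the pointwise bound
\[
\|\resolvent\Phi(\bs w^{(1)}-\bs w^{(2)})\| = \|\resolvent\Phi Z(\tilde v^{(1)}-\tilde v^{(2)})\|\leq \|\resolvent\Phi Z\|\,\|\bs w^{(1)}-\bs w^{(2)}\|.
\]

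Integrating this against $\Pi$ and then taking the infimum over all couplings $\Pi$ of $(\Pwbar,\Pw)$ produces
\[
 d_W(\Pclemp,\Pcl)\ \leq\ \|\resolvent\Phi Z\|\,d_W(\Pwbar,\Pw),
\]
which is the claim. The only real obstacle is step two, justifying the appearance of $Z$; once Assumption~\ref{Ass:data} is invoked to collapse the $x_0$-block, the rest is a routine pushforward/Lipschitz argument.
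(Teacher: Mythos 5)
Your proof is correct and follows essentially the same route as the paper's: both push a coupling of $(\Pwbar,\Pw)$ forward through the common linear map $\bs w\mapsto \resolvent\Phi\bs w$, and both use the fact that the deterministic, shared initial-condition block cancels in $\bs w^{(1)}-\bs w^{(2)}$ so that only the last $nT$ columns of $\resolvent\Phi$ matter, which is exactly what the factor $\|\resolvent\Phi Z\|$ encodes. Your explicit construction of the shifted vector $\tilde v^{(k)}$ just spells out the step the paper states in words ("the first $n$ columns of $R_\Phi\Phi$ can be excluded"), so there is no substantive difference.
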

Theorem~\ref{Thm:tri_ineq} gives us a way of setting the radius of the ambiguity set in~\eqref{eq:DR_problem}.
However, it requires knowledge of the Wasserstein distance $d_{W}\!\left(\Pwbar, \Pw\right)$. 
For this, we can use the following finite-sample convergence result~\cite{fournier2015rate}.

\begin{Lemma}{(\cite[Theorem 2]{fournier2015rate})}\label{Lemma:fournier}: Under Assumption~\ref{Ass:Light_Tail}, for $nT > 2$, for all $\kappa>0$, $ N \in \mathbb{N}$,
\begin{equation*}\scalemath{0.97}{
            \mathbb{P}^N \left\{d_{W}\!\left(\Pwbar, \Pw\right)\geq \kappa\right\} \leq
            \left\{\begin{aligned}
                &\mathsf{c}_1 \exp \left(-\mathsf{c}_2 N \kappa^{nT}\right) &\text{if } \kappa \leq 1 \\
                &\mathsf{c}_1 \exp \left(-\mathsf{c}_2 N \kappa^{\mathsf{a}}\right) &\text{if } \kappa>1\
            \end{aligned} \right. , }
    \end{equation*}
    where $\mathbb{P}^N$ is the $N$-fold distribution of the disturbance generating process. The positive constants $\mathsf{c}_1$ and $\mathsf{c}_2$ depend on the dimensions of $\bsw$ and on the constants $\mathsf{a}$, $\mathsf{b}$ and $\mathscr{E}_{\mathsf{a},\mathsf{b}}$ from Assumption~\ref{Ass:Light_Tail}.
\end{Lemma}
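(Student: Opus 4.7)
The plan is to combine two classical ingredients: a bound on the expected Wasserstein distance $\mathbb{E}[d_W(\Pwbar,\Pw)]$, and a concentration-of-measure argument that controls the deviation of $d_W(\Pwbar,\Pw)$ around its mean. The light-tail condition from Assumption~\ref{Ass:Light_Tail} will enter both ingredients, in the first to tame the integral contributions from distant regions of $\mathbb{R}^{nT}$, and in the second to allow truncation of the samples.

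For the expected-rate bound, I would follow the dyadic-partition approach of Dobri\'c--Yukich, refined by Fournier and Guillin. Tile $\mathbb{R}^{nT}$ at scale $2^{-\ell}$ by cubes $\{Q_{\ell,j}\}$ and couple $\Pwbar$ to $\Pw$ by first moving mass within each cube and then across successive scales. The transport cost at scale $\ell$ is of order $2^{-\ell}\sum_j |\Pwbar(Q_{\ell,j})-\Pw(Q_{\ell,j})|$, whose expectation is bounded by $2^{-\ell}\sum_j \sqrt{\Pw(Q_{\ell,j})/N}$ via the elementary inequality $\mathbb{E}|\hat p - p|\le \sqrt{p/N}$ for binomial proportions. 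By Cauchy--Schwarz the inner sum is controlled by the number of cubes carrying non-negligible mass, which grows like $2^{\ell nT}$. Summing the geometric series in $\ell$ and optimizing the cutoff yields the rate $N^{-1/(nT)}$ when $nT>2$; the light-tail hypothesis bounds the residual mass of cubes at distance $\gtrsim R$ by $\mathscr{E}_{\mathsf{a},\mathsf{b}}\,e^{-\mathsf{b}R^{\mathsf{a}}}$, allowing truncation at a slowly growing radius.

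For the concentration part, I would invoke the Kantorovich--Rubinstein duality $d_W(\Pwbar,\Pw)=\sup_{\mathrm{Lip}(f)\le 1}\int f\,d(\Pwbar-\Pw)$ to view $d_W(\Pwbar,\Pw)$ as a supremum of i.i.d. averages. On the event that every sample $\bs{w}^i$ lies in a ball of radius $R_N$, a single-coordinate change in the sample alters $d_W$ by at most $2R_N/N$, so McDiarmid's bounded-differences inequality gives $\mathbb{P}\{d_W-\mathbb{E}\,d_W\ge t\}\le \exp(-c\,N t^2/R_N^2)$. The probability of any sample falling outside this ball is at most $N\mathscr{E}_{\mathsf{a},\mathsf{b}}\,e^{-\mathsf{b}R_N^{\mathsf{a}}}$ by a union bound and Markov's inequality applied to Assumption~\ref{Ass:Light_Tail}. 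Choosing $R_N$ as a slowly growing function of $N$ and $\kappa$ balances the two contributions.

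Combining the mean bound with the concentration tail and optimizing the truncation radius separately in the regimes $\kappa\le 1$ and $\kappa>1$ produces the stated dichotomy: in the moderate-deviation regime the intrinsic convergence rate governs the exponent, yielding $N\kappa^{nT}$; in the large-deviation regime the tail decay of $\bsw$ governs the exponent, yielding $N\kappa^{\mathsf{a}}$. The main obstacle will be the bookkeeping needed to verify that the constants $\mathsf{c}_1,\mathsf{c}_2$ can be chosen uniformly in $\kappa$ and $N$, since the truncation radius, the dyadic depth, and the McDiarmid parameter all need to be tuned jointly; the combinatorial cost of the dyadic sum at dimension $nT$ is what forces the crossover at $\kappa=1$ and explains why a single exponent cannot cover both regimes.
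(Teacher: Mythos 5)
This lemma is not proved in the paper at all --- it is quoted verbatim from Theorem~2 of Fournier and Guillin --- so the only meaningful check is whether your sketch would actually establish the cited result. Your first ingredient is right: the dyadic-partition bound giving $\mathbb{E}[d_{W}(\Pwbar,\Pw)]\lesssim N^{-1/(nT)}$ for $nT>2$, with the light tail used to truncate the far cubes, is exactly the Dobri\'c--Yukich/Fournier--Guillin rate argument. The concentration step, however, has a genuine gap. If you condition on the event that \emph{all} $N$ samples lie in $B(0,R_N)$ and apply McDiarmid with bounded-difference constant $2R_N/N$, you face an irreconcilable tension. To make the union-bound failure probability $N\mathscr{E}_{\mathsf{a},\mathsf{b}}e^{-\mathsf{b}R_N^{\mathsf{a}}}$ as small as the target $\mathsf{c}_1\exp(-\mathsf{c}_2N\kappa^{nT})$ you must take $R_N\gtrsim(N\kappa^{nT})^{1/\mathsf{a}}$, which degrades the McDiarmid exponent to $N\kappa^2/R_N^2\asymp N^{1-2/\mathsf{a}}\,\kappa^{2-2nT/\mathsf{a}}$; for fixed $\kappa\in(0,1]$ this is bounded (or vanishing) in $N$ when $\mathsf{a}\le 2$ and grows only sublinearly when $\mathsf{a}>2$, so it never matches the required $N\kappa^{nT}$. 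If instead you let $R_N$ grow ``slowly'' as you propose, the truncation term is only polynomially small in $N$ and cannot be absorbed into an exponentially small bound. Either way the argument fails on essentially the whole moderate-deviation range $N^{-1/(nT)}\lesssim\kappa\le 1$, which is precisely the regime the first branch of the lemma addresses (below $\kappa\asymp N^{-1/(nT)}$ the bound is vacuous anyway by enlarging $\mathsf{c}_1$).

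Two repairs exist, and it is worth knowing both. One is to clip rather than condition: replace each sample by its projection onto $B(0,R)$ with $R$ depending only on $\kappa$ (say $R\asymp(\log(1/\kappa))^{1/\mathsf{a}}$), apply McDiarmid to the clipped empirical measure, and control the residual transport cost $\frac{1}{N}\sum_{i}(\|\bs{w}^i\|-R)_{+}$ by a Chernoff bound drawn directly from Assumption~\ref{Ass:Light_Tail}; since $nT>2$, the factor $\kappa^{2-nT}$ dominates any power of $\log(1/\kappa)$ and the exponent $N\kappa^{nT}$ survives. The other is the route Fournier and Guillin actually take: they never pass through $\mathbb{E}[d_W]$ plus concentration about the mean, but instead bound $W_1$ by a weighted sum, over dyadic levels and over annuli $\{2^{k}<\|x\|\le 2^{k+1}\}$, of total-variation discrepancies, and apply the multinomial deviation bound $\Prob\{\textstyle\sum_{F}|\hat p_F-p_F|\ge x\}\le 2^{K}e^{-Nx^2/2}$ level by level; this localizes the concentration so that distant annuli enter only through their exponentially small masses rather than through a global Lipschitz constant, and the crossover at $\kappa=1$ falls out of comparing the partition-counting term with the tail-mass term. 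Either route closes the gap; the one you describe, as written, does not.
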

\noindent
Combining Theorem~\ref{Thm:tri_ineq} and Lemma~\ref{Lemma:fournier} leads to the final probabilistic bound.
\begin{Theorem}[Finite-sample Guarantees]\label{Thm:worst-case_guarantees}
Fix a failure probability $\eta>0$. Under Assumptions~\ref{Ass:Light_Tail}, select the (decision dependent) radius $\varepsilon=\varepsilon(\Phi)$ of the ambiguity set $\mathcal{B}^{\varepsilon}\left(\Pemp\right)$ such that
\begin{equation*}
    \varepsilon(\Phi) \! \geq \! \frac{1}{N}\!\!\sum_{i=1}^N \!\left\| \! \resolvent \Phi Z \Delta\!\mathcal{M} \!\left(\!\PHIh \!\wemp \!-\!\xui \right)\! \right\| + \left\| \resolvent \Phi Z \right\| \kappa(\eta, N),
\end{equation*}
with
\begin{equation}\label{eq:kappa_confidence}
    \begin{aligned}
        \kappa\!\left(\eta, N\right)\!\triangleqq \!\left\{ \! \begin{aligned}
            \!&\left(\frac{\log \left(\mathsf{c}_1/ \eta\right)}{\mathsf{c}_2 N}\right)^{\frac{1}{nT}} \text {if } \!N\! \geq \! \frac{\log\! \left(\mathsf{c}_1/ \eta\right)}{\mathsf{c}_2 N} \\
            \!&\left(\frac{\log \left(\mathsf{c}_1/ \eta\right)}{\mathsf{c}_2 N}\right)^{\frac{1}{\mathsf{a}}} \text {if } \!N\! <\! \frac{\log\! \left(\mathsf{c}_1/ \eta\right)}{\mathsf{c}_2 N}
        \end{aligned}\right. \!,
    \end{aligned}
\end{equation}
and $\mathsf{c}_1$, $\mathsf{c}_2$ positive constants that depend on the dimensions of $\bsw$ and on the constants $\mathsf{a}$, $\mathsf{b}$ and $\mathscr{E}_{\mathsf{a},\mathsf{b}}$ from Assumption~\ref{Ass:Light_Tail}. 
Let $\PHItilde=[\tilde{\Phi}_x^\top, \tilde{\Phi}_u^\top]^\top$ be feasible for Problem~\eqref{eq:DR_problem}, and let $\tilde{\pi}$ be the state-feedback policy induced by $\tilde{\mathcal{K}} = \tilde{\Phi}_u \tilde{\Phi}_x^{-1}$, then
	\begin{equation*}
		\left\{\begin{aligned}
			\ &\mathbb{P}^N\left\{{J}\left(\tilde{\pi} \right) \leq \hat{J}^{DR}\left(\tilde{\pi}\right)\right\} \geq 1-\eta \\
			&\mathbb{P}^N\left\{\text{CVaR}_{1-\beta}^{\bs{y}\sim \mathbb{P}^{\mathcal{M}}_{\tilde{\pi}}}\left(g(\bs{y})\right) \leq 0 \right\} \geq 1-\eta
		\end{aligned}\right.\ .
	\end{equation*}
\end{Theorem}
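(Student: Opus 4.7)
The strategy is to combine the deterministic distribution-shift bound of Theorem~\ref{Thm:tri_ineq} with the probabilistic concentration result of Lemma~\ref{Lemma:fournier}, and then exploit the fact that the true closed-loop distribution $\Pcl$ lies inside the ambiguity set to translate worst-case bounds on $\mathcal{B}^{\varepsilon(\Phi)}(\Pemp)$ into guarantees on $\Pcl$.

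\textbf{Step 1: Invert the Fournier bound.} First I would set the right-hand side of Lemma~\ref{Lemma:fournier} equal to the failure probability $\eta$ and solve for $\kappa$. Equating $\mathsf{c}_1 \exp(-\mathsf{c}_2 N \kappa^{nT}) = \eta$ in the regime $\kappa \le 1$ yields $\kappa = (\log(\mathsf{c}_1/\eta)/(\mathsf{c}_2 N))^{1/(nT)}$, and the analogous computation with exponent $\mathsf{a}$ handles $\kappa>1$. This is precisely the definition of $\kappa(\eta,N)$ in~\eqref{eq:kappa_confidence}, so the event
\[
    \mathcal{E} \triangleqq \bigl\{ d_{W}(\Pwbar,\Pw) \le \kappa(\eta,N) \bigr\}
\]
satisfies $\mathbb{P}^N(\mathcal{E}) \ge 1-\eta$ under Assumption~\ref{Ass:Light_Tail}.

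\textbf{Step 2: Inclusion of the true distribution in the ambiguity set.} On the event $\mathcal{E}$, I would apply Theorem~\ref{Thm:tri_ineq} to any closed-loop map $\Phi$, bounding the disturbance-uncertainty term by $\|\resolvent \Phi Z\|\,\kappa(\eta,N)$ while keeping the data-dependent model-mismatch term as is. Because the chosen radius $\varepsilon(\Phi)$ upper bounds this sum by hypothesis, we get $d_W(\Pemp,\Pcl) \le \varepsilon(\Phi)$ uniformly in $\Phi$. In particular, for the feasible $\tilde{\Phi}$ in the statement,
\[
    \Pcl \in \mathcal{B}^{\varepsilon(\tilde{\Phi})}(\Pemp) \quad \text{on } \mathcal{E}.
\]
Note that this inclusion is uniform over feasible $\Phi$, which is what makes the argument compatible with $\varepsilon$ depending on the decision variable.

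\textbf{Step 3: Transfer the guarantees.} With the true closed-loop distribution trapped inside the ambiguity set, the two claims become immediate. For the cost, since $\Pcl \in \mathcal{B}^{\varepsilon(\tilde{\Phi})}(\Pemp)$,
\[
    J(\tilde{\pi}) = \mathbb{E}^{\bs{y} \sim \Pcl}[h(\bs{y})] \le \sup_{\mathbb{Q} \in \mathcal{B}^{\varepsilon(\tilde{\Phi})}(\Pemp)} \mathbb{E}^{\bs{y} \sim \mathbb{Q}}[h(\bs{y})] = \hat{J}^{DR}(\tilde{\pi}).
\]
For the constraint, feasibility of $\tilde{\Phi}$ in~\eqref{eq:DR_problem} gives $\sup_{\mathbb{Q}\in\mathcal{B}^{\varepsilon(\tilde{\Phi})}(\Pemp)} \text{CVaR}_{1-\beta}^{\bs{y}\sim\mathbb{Q}}(g(\bs{y})) \le 0$, and since $\Pcl$ is admissible in this supremum, the same inequality holds for $\text{CVaR}_{1-\beta}^{\bs{y}\sim\Pcl}(g(\bs{y}))$. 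Both implications are deterministic consequences of $\mathcal{E}$ occurring, so they inherit the probability lower bound $1-\eta$ from Step~1.

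\textbf{Main obstacle.} The only subtle point is that the ambiguity-set radius is itself a function of the optimization variable $\Phi$, whereas the standard DR argument assumes a fixed radius. This could in principle create a circular dependence if the probabilistic statement were needed to hold uniformly over all $\Phi$; however, since the concentration event $\mathcal{E}$ is defined solely through the data-generated quantity $d_W(\Pwbar,\Pw)$ and does not involve $\Phi$ at all, the bound in Theorem~\ref{Thm:tri_ineq} automatically becomes valid for every $\Phi$ simultaneously on $\mathcal{E}$. Recognizing this decoupling is the key observation that makes the short argument rigorous; the rest is simply chaining the established deterministic bounds.
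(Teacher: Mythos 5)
Your proposal is correct and follows essentially the same route as the paper: invert the concentration bound of Lemma~\ref{Lemma:fournier} to get the event $\{d_W(\Pwbar,\Pw)\le\kappa(\eta,N)\}$ with probability $1-\eta$, conclude via Theorem~\ref{Thm:tri_ineq} that $\Pcl$ lies in the ambiguity set, and transfer the cost and CVaR guarantees through the worst-case suprema. Your explicit observation that the concentration event is defined independently of $\Phi$, which resolves the apparent circularity of the decision-dependent radius, is a useful elaboration of a point the paper leaves implicit.
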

\begin{proof}
Following the derivations in Theorem~\ref{Thm:tri_ineq} and Lemma~\ref{Lemma:fournier}, the true probability distribution $\Pcl$ lies within the ambiguity set $\mathcal{B}^{\varepsilon}\left(\Pemp\right)$ of radius $\varepsilon$ with confindence at least $1-\eta$, i.e.,
\begin{equation*}
		\mathbb{P}^N\left\{\Pcl \in \mathcal{B}^{\varepsilon}\left(\Pemp\right) \right\} \geq 1-\eta \ .
\end{equation*}
The proof then follows from the definition of worst-case cost and constraint.
\end{proof}

A benefit of the distributionally robust formulation~\eqref{eq:DR_problem} is that once we get a feasible solution we can directly control the out-of-sample performance. The actual closed-loop cost is guaranteed to be upper bounded by the DR optimal objective with high probability; similarly, feasibility is also guaranteed with high probability. Of course this requires a principled way of choosing the radius $\epsilon$ to guarantee that the true distribution is captured by the ambiguity set. 

On the flip side, the term $\kappa(\eta,N)$ decays slowly, that is, the rate $O(N^{-1/nT})$ is exponentially slow with the system dimension $n$ and horizon $T$. In the literature, this limitation is commonly referred to as the curse of dimensionality~\cite{mohajerin2018data}. In practice, we could use a $\kappa$ different from the one proscribed by Lemma~\ref{Lemma:fournier}. In this case, we can treat $\kappa$ as a hyperparameter and tune it via cross-validation.


\section{Tractable reformulation}\label{Sec:Reformulation}
In this section, we use the result of Theorems~\ref{Thm:tri_ineq},~\ref{Thm:worst-case_guarantees} to obtain a tractable reformulation that approximates problem~\eqref{eq:DR_problem}. We focus on the class of piece-wise affine cost and constraint functions. In particular, we consider cost functions of the form 
\begin{equation}\label{eq:cost_function}
h\left(\bs{y} \right) \!=\! \max_{j\leq N_J} \{h_j(\bs{y})\triangleq\! a_{j} \bs{y} + b_j\},
\end{equation}
for some $N_J>0$. The constraint function is defined similarly
\begin{equation}\label{eq:constraint_function}
g\left(\bs{y}\right) \!=\! \max_{l\leq N_L} \{g_l(\bs{y})\triangleq c_{l} \bs{y} + d_l\},
\end{equation}
for some $N_L\ge 0$.
We argue that the above functions describe rich cost and constraint function classes, including $\ell_1$-norm objectives, e.g. $\snorm{\bf{y}}$. Dealing with other function classes, such as quadratic, would require changing the type ambiguity set (type-$2$ Wasserstein distance, e.g.~\cite{aolaritei2023wasserstein, shafieezadeh2023new}), and lead to a more complex reformulation in presence of model mismatch. We leave that for future work.

As observed from Theorem~\ref{Thm:tri_ineq}, the distance between the predictive and actual closed-loop distributions depends on the decision variable $\Phi$. Moreover, the model uncertainty further complicates this coupling, inducing nonlinearities. To deal with the latter, we appeal to small-gain techniques inspired by robust control and recent advances in robust SLS~\cite{anderson2019system}. In particular, we impose a small-gain condition on the maximum allowed magnitude of the system responses $\Phi$, with the gain scaling inversely proportional to the model error. We control the gain using a hyperparameter $\gamma>0$, over which we optimize.

\begin{Lemma}[Small-gain bound]\label{Lemma:small_gain}
   Assume that $ d_{W}\!\left(\Pwbar, \Pw\right)\le \kappa$, for some $\kappa>0$, pick a $\gamma \in [0,1)$, and let $\Phi$ satisfy
    \begin{equation}\label{eq:constraint_on_gamma}
	\max\{e_A, e_B\}\left\|\PHIh Z\right\|<\gamma \ .
	\end{equation}
    Then,
        $$d_{W}\!\left(\Pemp, \Pcl\right) \leq  \ \varepsilon(\gamma,\Phi,\kappa)$$
		$$\varepsilon(\gamma,\Phi,\kappa) \triangleqq \frac{\gamma}{1-\gamma} \frac{1}{N}\sum_{i=1}^N \left\| \PHIh \wemp -\xui \right\|+ \frac{\kappa}{1-\gamma}\left\| \PHIh Z\right\| \ . $$
\end{Lemma}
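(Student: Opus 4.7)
The plan is to combine the Distribution Shift bound from Theorem~\ref{Thm:tri_ineq} with a Neumann-series expansion of the resolvent $\resolvent = (I + \PHI Z \Delta\!\mathcal{M})^{-1}$, leveraging the small-gain assumption~\eqref{eq:constraint_on_gamma} to obtain a uniform bound on $\|\resolvent\|$.

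First I would verify that $\|\Delta\!\mathcal{M}\| \leq \max\{e_A, e_B\}$. Since $\Delta\!\mathcal{M} = [\Delta\!\mathcal{A}\ \Delta\!\mathcal{B}]$ with $\Delta\!\mathcal{A}, \Delta\!\mathcal{B}$ block-diagonal with $T\!+\!1$ copies of $\Delta A$, $\Delta B$ respectively, the induced $\ell_1$ (max-column-sum) norm satisfies $\|\Delta\!\mathcal{A}\| = \|\Delta A\| \leq e_A$, $\|\Delta\!\mathcal{B}\| = \|\Delta B\| \leq e_B$, and concatenation gives $\|\Delta\!\mathcal{M}\| = \max\{\|\Delta\!\mathcal{A}\|, \|\Delta\!\mathcal{B}\|\} \leq \max\{e_A, e_B\}$. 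Combined with submultiplicativity, the hypothesis~\eqref{eq:constraint_on_gamma} then yields $\|\PHI Z \Delta\!\mathcal{M}\| \leq \max\{e_A, e_B\} \|\PHIh Z\| < \gamma < 1$.

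Next, because the induced-norm of $\PHI Z \Delta\!\mathcal{M}$ is strictly less than one, the Neumann series $\resolvent = \sum_{k=0}^{\infty}(-\PHI Z \Delta\!\mathcal{M})^k$ converges and obeys $\|\resolvent\| \leq (1-\|\PHI Z \Delta\!\mathcal{M}\|)^{-1} \leq (1-\gamma)^{-1}$. (The lower block-triangular structure of $\PHI Z \Delta\!\mathcal{M}$ guarantees that $\resolvent$ is well defined regardless, so the Neumann argument is only used to obtain the quantitative bound, not existence.) Submultiplicativity then delivers the two key inequalities
\[
\|\resolvent \PHI Z \Delta\!\mathcal{M}\| \leq \frac{\|\PHI Z \Delta\!\mathcal{M}\|}{1-\gamma} \leq \frac{\gamma}{1-\gamma}, \qquad \|\resolvent \PHI Z\| \leq \frac{\|\PHIh Z\|}{1-\gamma}.
\]

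Finally, I would plug these into Theorem~\ref{Thm:tri_ineq}. The first summand becomes $\frac{\gamma}{1-\gamma}\cdot\frac{1}{N}\sum_i \|\PHIh \wemp - \xui\|$, and the second summand, combined with the hypothesis $d_W(\Pwbar,\Pw)\leq \kappa$, becomes $\frac{\kappa}{1-\gamma}\|\PHIh Z\|$. Adding these yields $\varepsilon(\gamma,\PHI,\kappa)$. The only non-routine step is the norm computation for $\Delta\!\mathcal{M}$ and the justification of the Neumann bound; everything else is direct substitution.
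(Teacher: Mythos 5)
Your proof is correct and follows essentially the same route as the paper's: bound $\|\Phi Z\Delta\mathcal{M}\|<\gamma$ via \eqref{eq:constraint_on_gamma} and submultiplicativity, use the Neumann series to get $\|(I+\Phi Z\Delta\mathcal{M})^{-1}\|\le (1-\gamma)^{-1}$, and substitute into the two terms of Theorem~\ref{Thm:tri_ineq}. The only difference is that you explicitly verify $\|\Delta\mathcal{M}\|\le\max\{e_A,e_B\}$ from the block structure under the induced $\ell_1$ norm, a step the paper leaves implicit.
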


The simplified upper bound on the radius $\varepsilon$ is now a convex function of the nominal system responses. Hence, we can now use tools from DR optimization to reformulate~\eqref{eq:DR_problem}. The bound is still non-convex in the auxiliary variable $\gamma$. Since, however, this variable is a scalar, we can perform a grid search over $\gamma$, noting that the bound is a convex function on $\Phi$, for any fixed $\gamma$. 
Note that we use an upper-bound for the true ambiguity radius. As a result, the reformulation is only a conservative approximation of problem~\eqref{eq:DR_problem}. 

\begin{Theorem}[Approximate Reformulation]\label{Thm:Tractabe_worst_case}
    Consider the cost and constraint functions as defined in~\eqref{eq:cost_function},~\eqref{eq:constraint_function} respectively, and let Assumption~\ref{Ass:Light_Tail} hold. 
    Fix a failure probability $\eta>0$ and select $\kappa=\kappa(\eta,N)$ as in~\eqref{eq:kappa_confidence}. Define $\quad\underline{\lambda} = \max_{j\leq  N_J} \left\|a_{j}\right\|_\infty$, $\underline{\theta} = \max_{l\leq N_L} \left\|c_{l}\right\|_\infty$, and $\underline{\varepsilon} = \varepsilon(\gamma,\Phi,\kappa)$ as defined in Lemma~\ref{Lemma:small_gain}.
Consider the \textit{doubly robust} (RR) problem 
\begin{equation}\label{eq:JRR_problem}
		\begin{aligned}
			J^{RR}(\pi)=
			&\inf_{\Phi, \gamma, s_{i}, q_i, t} \underline{\lambda}\, \underline{\varepsilon}+\frac{1}{N} \sum_{i=1}^{N} s_{i} \\
			\text{ s.t. } 
                & \eqref{eq:causality},\ \eqref{eq:SLS_dynamics_constraint_nominal},\ \eqref{eq:constraint_on_gamma},\\
			\text{Cost:}& \left\{\begin{aligned}& a_{j} \hat{\bs{y}}^i + b_{j} \leq s_{i} \\ 
			&{\forall\  i\!=\!1,\dots,N,\  j\!=\!1,\dots, N_J}\  \end{aligned}\right.\\
\text{CVaR:}& \left\{\begin{aligned}&\underline{\theta}\, \underline{\varepsilon} + \frac{1}{N}\! \sum_{i=1}^{N} \!q_{i} \leq t \beta \\
				&\left(c_{l} \hat{\bs{y}}^{i} + d_{l} +t \right)_{+} \leq q_{i} \\
				&\forall\  i=1,\dots,N,\  l=1,\dots, N_L
    \end{aligned}\right.
		\end{aligned}
	\end{equation}
If the problem is feasible, then with probability at least $1-\eta$, \textbf{i)} the optimal cost obtained is an upper bound on the original cost in ~\eqref{eq:original_problem_phi}, and \textbf{ii)} the resulting feedback policy satisfies the CVaR constraint of problem~\eqref{eq:original_problem_phi}. 
\end{Theorem}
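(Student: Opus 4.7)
The plan is to combine the decision-dependent deterministic upper bound on the Wasserstein radius from Lemma~\ref{Lemma:small_gain} with standard strong-duality reformulations of worst-case expectation and worst-case CVaR over type-$1$ Wasserstein balls (as in~\cite{mohajerin2018data}), and then lift the probabilistic guarantee from Theorem~\ref{Thm:worst-case_guarantees}. Concretely, I would first fix any feasible tuple $(\Phi,\gamma)$ satisfying~\eqref{eq:causality},~\eqref{eq:SLS_dynamics_constraint_nominal} and the small-gain constraint~\eqref{eq:constraint_on_gamma}. Lemma~\ref{Lemma:small_gain}, together with the event $\{d_W(\Pwbar,\Pw)\le \kappa(\eta,N)\}$ supplied by Lemma~\ref{Lemma:fournier}, then guarantees that $\Pcl\in\mathcal{B}^{\underline\varepsilon}(\Pemp)$ with probability at least $1-\eta$, where $\underline\varepsilon=\varepsilon(\gamma,\Phi,\kappa)$ is the convex surrogate defined in the statement.

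Next, I would reformulate the worst-case expectation of the piecewise affine cost~\eqref{eq:cost_function}. Using the standard Kantorovich duality result for Wasserstein-$1$ balls centered at an empirical distribution with $N$ atoms, for any Lipschitz cost of the form $h(\bs{y}) = \max_{j\le N_J}(a_j\bs{y} + b_j)$ one has
\begin{equation*}
\sup_{\mathbb{Q}\in\mathcal{B}^{\underline\varepsilon}(\Pemp)} \mathbb{E}^{\bs{y}\sim\mathbb{Q}}[h(\bs{y})] \;=\; \underline\lambda\,\underline\varepsilon + \frac{1}{N}\sum_{i=1}^N \max_{j\le N_J}\bigl(a_j\hat{\bs{y}}^i+b_j\bigr),
\end{equation*}
with the dual-norm Lipschitz constant $\underline\lambda = \max_j\|a_j\|_\infty$ arising because we use the $\ell_1$ transport cost. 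Introducing the epigraph variables $s_i \geq \max_j(a_j\hat{\bs{y}}^i+b_j)$ linearises the inner maximum and yields exactly the cost block of~\eqref{eq:JRR_problem}. For the CVaR constraint, I would use the Rockafellar--Uryasev variational form and interchange sup and inf (valid by convexity/concavity), turning $\sup_\mathbb{Q}\mathrm{CVaR}_{1-\beta}^{\bs{y}\sim\mathbb{Q}}(g(\bs{y}))\le 0$ into $\inf_t\{\beta^{-1}\sup_\mathbb{Q}\mathbb{E}[(g(\bs{y})+t)_+]-t\}\le 0$. Applying the same Wasserstein duality to the piecewise affine loss $(g(\bs{y})+t)_+ = \max\{0,\max_l(c_l\bs{y}+d_l+t)\}$ and epigraphing via $q_i$ recovers the CVaR block of~\eqref{eq:JRR_problem}, with $\underline\theta=\max_l\|c_l\|_\infty$.

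Finally, I would invoke Theorem~\ref{Thm:worst-case_guarantees}. Since any feasible $(\Phi,\gamma,s_i,q_i,t)$ of~\eqref{eq:JRR_problem} corresponds to a $\Phi$ that is feasible for the DR problem~\eqref{eq:DR_problem} with the surrogate radius $\underline\varepsilon$, and since $\underline\varepsilon$ dominates the exact radius bound required by Theorem~\ref{Thm:worst-case_guarantees}, on the event $\{d_W(\Pwbar,\Pw)\le\kappa(\eta,N)\}$ (which has probability at least $1-\eta$) we have $\Pcl\in\mathcal{B}^{\underline\varepsilon}(\Pemp)$. Conclusions (i) and (ii) then follow immediately: the optimal value of~\eqref{eq:JRR_problem} upper-bounds $\mathbb{E}^{\bs{y}\sim\Pcl}[h(\bs{y})]$, and the CVaR constraint is satisfied under $\Pcl$.

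The main obstacle is bookkeeping around the decision-dependent radius: the classical DR duality is stated for a fixed $\varepsilon$, so one must verify that the reformulation still holds when $\underline\varepsilon$ is a convex function of $\Phi$, which works because for every fixed $\Phi$ the inner worst-case problem is a standard Wasserstein-$1$ DRO, and the reformulation only requires substituting the $\Phi$-dependent value of $\underline\varepsilon$ into the linear-in-$\varepsilon$ dual. The non-convex coupling through $\gamma$ is innocuous since $\gamma$ is a scalar hyperparameter (as discussed after Lemma~\ref{Lemma:small_gain}); for each fixed $\gamma$ the resulting problem is an LP in $(\Phi,s_i,q_i,t)$.
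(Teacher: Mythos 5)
Your proposal is correct and follows essentially the same route as the paper: Wasserstein-$1$ duality for piecewise affine losses (Theorem~6.3/Remark~6.6 of~\cite{mohajerin2018data}) applied with the surrogate radius $\underline{\varepsilon}$ from Lemma~\ref{Lemma:small_gain}, the Rockafellar--Uryasev form with a sup/inf interchange for the CVaR constraint, epigraph variables, and the concentration bound to lift everything to a $1-\eta$ guarantee. One minor remark: the paper only uses the unconditional inequality $\sup_{\mathbb{Q}}\inf_t \le \inf_t\sup_{\mathbb{Q}}$, so the interchange yields a conservative inclusion of feasible sets rather than an equality, which is all that is needed for the one-sided guarantee; your assertion that the exchange is exactly ``valid by convexity/concavity'' is stronger than required and would need a minimax theorem to justify.
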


For every fixed value of $\gamma$, Problem~\eqref{eq:JRR_problem} is a Linear Programm, similar to the SAA problem~\eqref{eq:problem_SAA} under piece-wise linear convex costs~\eqref{eq:cost_function} and constraints~\eqref{eq:constraint_function}. From this perspective, Problem~\eqref{eq:JRR_problem} belongs to the same complexity class as the SAA optimization problem, while providing ``doubly robust" solutions against the model mismatch and the uncertainty in the disturbance distribution. 

The $\ell_1$ induced norm of the system responses is regularized via the term $\kappa(1-\gamma)^{-1}\|\Phi Z\|$ in $\underline{\epsilon}$ appearing in the cost and constraints.
The penalty coefficient $\kappa$ scales proportionally to the distance between the empirical and true distributions of the disturbance $d_{W}\!\left(\Pwbar, \Pw\right)$. Note that restricting the $\ell_1$ induced norm of the responses has the interpretation of imposing $\ell_1\rightarrow\ell_1$-robustness akin to the $\ell_{\infty}\rightarrow\ell_{\infty}$ robustness in~\cite{anderson2019system}. Unlike standard robust control, the degree of robustness is controlled by the distance $d_{W}\!\left(\Pwbar, \Pw\right)$. The more collected data we have, the milder the regularization.

Constraint~\eqref{eq:constraint_on_gamma} explicitly restricts the norm of the responses, accounting for the effect of model mismatch. This constraint scales inversely with the model errors; smaller errors allow more aggressive controllers. The optimization variable $\gamma$ can be interpreted as a hyperparameter that balances the trade-off between allowing more aggressive controllers in~\eqref{eq:constraint_on_gamma} and suffering from the worst-case distribution shift resulting from the model mismatch as captured by $(1-\gamma)^{-1}$.

Finally, in $\underline{\varepsilon}$, we also penalize differences $\PHIh \wemp -\begin{bmatrix} \bs x^{i,\top}&\bs u^{i,\top} \end{bmatrix}^\top$ between the predicted trajectory and the collected data. This prevents the predicted trajectory from deviating too much from the collected ones, thus, ameliorating the distribution shift due to updating the closed-loop controller. 

When the model error $\Delta\!\mathcal{M}$ is zero, regularizing the system responses robustifies the controller against distribution shifts in the disturbance distribution.
Conversely, if there is no uncertainty about the disturbance distribution, the small gain constraint and the regularization robustify the controller against uncertain dynamics.

We remark that the tractable formulation provided in Problem~\eqref{eq:JRR_problem} is a conservative reformulation of Problem~\eqref{eq:DR_problem}. This is due to the sub-optimalities introduced in the derivations leading to the distribution shift bound in Thm.~\ref{Thm:tri_ineq}, and due to the fact that we are assuming unbounded support for the disturbance distribution which in turns makes the CVaR reformulation conservative.

\section{Extensions}\label{Sec:Extensions}
\subsection{Arbitrary initial conditions}
We address here the more general case where we allow for arbitrary initial conditions in the data collection and control phases. Following the convention of Assumption~\ref{Ass:data}, let \[\bs{w}^i\triangleqq \begin{bmatrix}
        x^i_0 \\
      w^i_{0:T-1}
    \end{bmatrix},i=1,\dots,N\]
    where $x^i_0$ is allowed to vary across different data collection experiments.

We adapt the nominal empirical prediction in~\eqref{eq:prediction_y_i} as follows
$$\hat{\bs{y}}^{i} := \PHIh (\wemp + \tilde{\bs{x}}^i_0) 
$$ with $$\tilde{\bs{x}}^i_0 \triangleqq \begin{bmatrix}
x_0- {x}_0^i\\ 0_{nT\times 1}
\end{bmatrix},\ \ $$
${x}_0^i$ the initial condition of the $i^{th}$ trajectory in the dataset and ${x}_0$ the new initial condition for the control task.
The resulting empirical predictive distribution is defined in the same way as in eq.~\eqref{eq:predictive_empirical_distribution}.

Similarly, we can write the  empirical (finite-sample) version of the
true closed-loop distribution, for a new initial condition $\bar{x}_0$ as in~\eqref{eq:empirical_closed_loop_distribution} but with
$$\yclemp :=  \resolvent \Phi (\wtremp + \tilde{\bs{x}}^i_0)\ .$$ 

Following a similar derivation as in Section~\ref{Sec:Shift}, we can decompose the distance using the triangle inequality~\eqref{eq:Triangle_inequality}.
The component related to the model mismatch can be upper-bounded following the same procedure as in Lemma~\ref{Lem:1} as follows:
\begin{equation*}\scalemath{0.88}{
    \begin{aligned}
            &d_{W}\!\left(\Pemp, \Pclemp \right)
            \leq \frac{1}{N}\sum_{i=1}^N \left\| \resolvent \Phi Z \Delta\!\mathcal{M} \left(\PHIh (\wemp+\tilde{\bs{x}}^i_0) \!-\! \begin{bmatrix}
		    \bs{x}^i \\ \bs{u}^i
		\end{bmatrix} \right) \right\|
\end{aligned}}
\end{equation*}

The bound on the component related to the disturbance distribution uncertainty is unaffected by the new initial condition. This is clear by noting that the first entry of the vectors $\tilde{\bs{w}}$ and ${\bs{w}}^i+\tilde{\bs{x}}^i_0$ is the same and equal to the known new initial condition for the control task $x_0$. With a slight abuse of notation let
 $$ \Pwbar:= \frac{1}{N} \sum_{i=1}^{N} \delta_{{\bs{w}}^{i}+\tilde{\bs{x}}^i_0}\ .$$ 
Then, we recover the same bound as in Lemma~\ref{Lem:2}:
\begin{equation*}
		d_{W}\!\left(\Pclemp, \Pcl\right) \leq \left\| \resolvent \Phi \right\| d_{W}\!\left(\Pwbar, \Pw\right)\ . 
\end{equation*}

Following the derivations in Section~\ref{Sec:Reformulation}, we can formulate the small-gain bound for arbitrary initial conditions as follows.
\begin{Lemma}[Small-gain bound for arbitrary initial conditions]\label{Lemma:small_gain_for_arbitrary_initial_conditions}
   Assume that $ d_{W}\!\left(\Pwbar, \Pw\right)\le \kappa$, for some $\kappa>0$, pick a $\gamma \in [0,1)$, and let $\Phi$ satisfy
    \begin{equation*}
	\max\{e_A, e_B\}\left\|\PHIh Z\right\|<\gamma \ .
	\end{equation*}
    Then,
        \begin{equation*}
            \begin{aligned}
                &\quad \quad d_{W}\!\left(\Pemp, \Pcl\right) \leq \varepsilon(\gamma,\Phi,\kappa) \triangleqq\\
                & \frac{\gamma}{1-\gamma} \frac{1}{N}\sum_{i=1}^N \left\| \PHIh (\wemp+\tilde{\bs{x}}^i_0) - \begin{bmatrix}
		    \bs{x}^i\\ \bs{u}^i
		\end{bmatrix} \right\|+ \frac{\kappa}{1-\gamma}\left\| \PHIh Z\right\| \ .
            \end{aligned}
        \end{equation*}
\end{Lemma}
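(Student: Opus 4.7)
The plan is to mirror the proof of Lemma~\ref{Lemma:small_gain}, with $\PHIh\wemp$ replaced throughout by $\PHIh(\wemp+\tilde{\bs{x}}^i_0)$. I start from the triangle inequality~\eqref{eq:Triangle_inequality} combined with the arbitrary-initial-condition adaptations of Lemma~\ref{Lem:1} and Lemma~\ref{Lem:2} already derived in Section~\ref{Sec:Extensions}, which together yield
\begin{equation*}
\scalemath{0.85}{
d_{W}(\Pemp,\Pcl) \leq \frac{1}{N}\sum_{i=1}^N \left\|\resolvent \Phi Z \Delta\!\mathcal{M}\bigl(\PHIh(\wemp+\tilde{\bs{x}}^i_0) - \xui\bigr)\right\| + \left\|\resolvent \Phi Z\right\|\, d_{W}(\Pwbar,\Pw).
}
\end{equation*}
This reduces the task to bounding the induced $\ell_1$ norm of the resolvent $\resolvent = (I+\Phi Z \Delta\!\mathcal{M})^{-1}$ under the hypotheses of the lemma.

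The next step is a standard small-gain argument via a Neumann series. Because $\Delta\!\mathcal{M} = [\Delta\mathcal{A},\,\Delta\mathcal{B}]$ is a horizontal concatenation of block-diagonal matrices, each column of $\Delta\!\mathcal{M}$ is a zero-padded copy of a column of $\Delta A$ or $\Delta B$, so the max-column-sum (induced $\ell_1$) norm gives $\|\Delta\!\mathcal{M}\|\leq\max\{e_A,e_B\}$ by the bounds~\eqref{eq:uncertainty_bounds_on_matrices}. Submultiplicativity together with the hypothesis $\max\{e_A,e_B\}\|\PHIh Z\|<\gamma$ then yields $\|\Phi Z\Delta\!\mathcal{M}\|<\gamma<1$, so the Neumann series $\sum_{k\geq 0}(-\Phi Z\Delta\!\mathcal{M})^k$ converges to $\resolvent$ with $\|\resolvent\|\leq 1/(1-\gamma)$.

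Finally, applying submultiplicativity termwise to the bound above gives $\|\resolvent\Phi Z\Delta\!\mathcal{M}\|\leq \gamma/(1-\gamma)$, which I pull outside the empirical average in the model-mismatch term, and $\|\resolvent\Phi Z\|\leq \|\PHIh Z\|/(1-\gamma)$, which combined with $d_{W}(\Pwbar,\Pw)\leq\kappa$ handles the disturbance term. The two estimates together reproduce exactly the claimed expression for $\varepsilon(\gamma,\Phi,\kappa)$. There is no real obstacle: the derivation is mechanically identical to that of Lemma~\ref{Lemma:small_gain}, the only new element being the extra additive $\tilde{\bs{x}}^i_0$ inside the model-mismatch term. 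The disturbance term is unaffected because, as already discussed in Section~\ref{Sec:Extensions}, the initial-condition shift is absorbed into the definition of $\Pwbar$ without altering the bound on $d_{W}(\Pwbar,\Pw)$.
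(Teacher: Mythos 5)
Your proposal is correct and follows essentially the same route as the paper: the paper proves this lemma by invoking the arbitrary-initial-condition adaptations of Lemmas~\ref{Lem:1} and~\ref{Lem:2} from Section~\ref{Sec:Extensions} and then repeating the Neumann-series/small-gain argument of Lemma~\ref{Lemma:small_gain} verbatim, bounding $\|\resolvent\|\le 1/(1-\gamma)$ from $\|\Phi Z\Delta\!\mathcal{M}\|\le\max\{e_A,e_B\}\|\PHIh Z\|<\gamma$ and applying submultiplicativity to each term. Your observation that the block-diagonal structure of $\Delta\mathcal{A},\Delta\mathcal{B}$ gives $\|\Delta\!\mathcal{M}\|\le\max\{e_A,e_B\}$ in the induced $\ell_1$ norm, and that the disturbance term is unchanged because the initial-condition shift is absorbed into $\Pwbar$, matches the paper's reasoning exactly.
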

With the arbitrary initial condition, we maintain the same interpretation of the bound on the ambiguity set radius as in the case with fixed initial condition.

\subsection{Affine SLS formulation}

The results presented in this paper can be naturally extend to the affine system level parametrization formulation. Allowing for a disturbance-affine feedback can be useful for tracking tasks and it can be employed to derive tube-based model predictive control formulations~\cite{sieber2021system}. 
Whenever the initial condition is not zero, the state-feedback policy $\bs{u} = \mathcal{K} \bs{x}$ is already equivalent to an affine feedback policy. It is possible, see e.g.~\cite{sieber2021system}, to introduce an explicit affine term that does not rely on the initial condition being non-zero, we can augment the dynamics to accommodate extended state and disturbance vectors.
The interpretation of the bound remains similar as in the case of linear feedback, but with the extra affine term in the control input. 

\section{Numerical example}\label{Sec:Numerics}
We highlight the need of robustness against model mismatch and finite sample of the disturbance distribution by means of numerical examples. We do that by showing how the doubly robust formulation can handle perturbations in the model and uncertainty related to limited sample sizes much better than the SAA approach. Our results show that the robustness is not detrimental for the performances of the controller even when the model mismatch is not as large as expected, thus making the doubly robust formulation a viable control design option even when no specific robustness guarantees are required.

We consider the system 
\begin{equation*}\label{eq:Dynamics_example}
    \bs{x}_{k+1}=
    \begin{bmatrix}
        0.95 & -0.02\\
        0.0 & 0.2\end{bmatrix} \bs{x}_{k} + 
    \begin{bmatrix}
        0.5 \\
        -0.01 \\
    \end{bmatrix} \bs{u}_{k} +\bs{w}_k,
\end{equation*}
    with additive disturbance $\bs{w}_k \sim \mathcal{N}(0,0.05 I)$, and initial conditions $x_0 = \begin{bmatrix}
        -0.5& -0.5
    \end{bmatrix}^\top$. We consider a horizon $T=10$ and a cost function that regulates the system to the origin 
        \begin{equation*}
            h\left( \xu \right) = \left\|\begin{bmatrix}
                \bs{Q} & \\ & \bs{R}
            \end{bmatrix} \xu \right\| \ ,
        \end{equation*}
        where $\|\cdot\|$ denotes the $\ell_1$ norm,
        with matrices $\bs{Q}$ and $\bs{R}$ block diagonal matrices with blocks $Q=\begin{bmatrix}
                0.01 & 0 \\ 0 & 1
            \end{bmatrix}$ and and $R=\begin{bmatrix}
                0.01
            \end{bmatrix}$ respectively.
    We add a constraint that, at each timestep $k = 1,\dots,T$, constraints the first coordinate of the state to be smaller than $0.8$, i.e.,
    \begin{equation}
        g_k(\bs{y}):\ \ \begin{bmatrix}
            1 & 0
        \end{bmatrix} \bs{x}_{k} - 0.8 \leq 0\ , \quad \forall k=1,\dots,T \ .
    \end{equation}
    This is imposed using the CVaR formulation with $\beta = 0.3$.
    We assume that we have access to a dataset $\mathcal{D}^{N,T}$ comprising $N=20$ trajectories of length $T$. These trajectories have been collected from the system starting from the initial conditions $x_0$ and applying a state-feedback matrix $K=\begin{bmatrix}
    -0.2 & -0.1
    \end{bmatrix}$, i.e. \mbox{$\bs{u}^i_k = K \bs{x}^i_k$}, $k=1,\dots,T$, $i=1,\dots,N$. 
    
We also assume we are given nominal system matrices 
\begin{equation*}
    \hat{A} = \begin{bmatrix} 
            0.95 & 0.01\\
            0.0 & 0.2\end{bmatrix}\ , \quad \hat{B} = \begin{bmatrix}
            0.5 \\
            0.02 \\
        \end{bmatrix}
\end{equation*}
resulting in mismatches $\epsilon_A = \epsilon_B = 0.03$. While we assume that the values of $\epsilon_A$ and $\epsilon_B$ are known, the true dynamics remain unknown. This reflects the practical situations where estimates of the system matrices are obtained through identification, with (often statistical) bounds on the errors. While we consider here the bound on the model error to be known and deterministic, probabilistic bounds can be easily integrated, see for example~\cite{micheli2022data}.
In all the simulations we fix the value of $\kappa=0.005$, this parameter needs to be tuned in practice, for example via cross-validation, see e.g.~\cite{micheli2022data}. We are solving the problem for multiple fixed values of $\gamma \in (0,\ 1)$ and pick the solution that results in the lowest robust optimization cost $J^{RR}$ of Problem~\ref{eq:JRR_problem}.

We first compare the optimal solutions of the RR and SAA approaches. In Fig.~\ref{Fig:planned_trajectories} we compare the predicted optimal trajectories for both algorithms. We can observe that the SAA algorithm plans much more aggressive trajectories. In Fig.~\ref{Fig:validation_trajectories} we show the closed-loop trajectories produced by the respective controller on the true system for $100$ new realization of the random disturbance vector.

\begin{figure}[t]
\includegraphics[width=1\columnwidth]{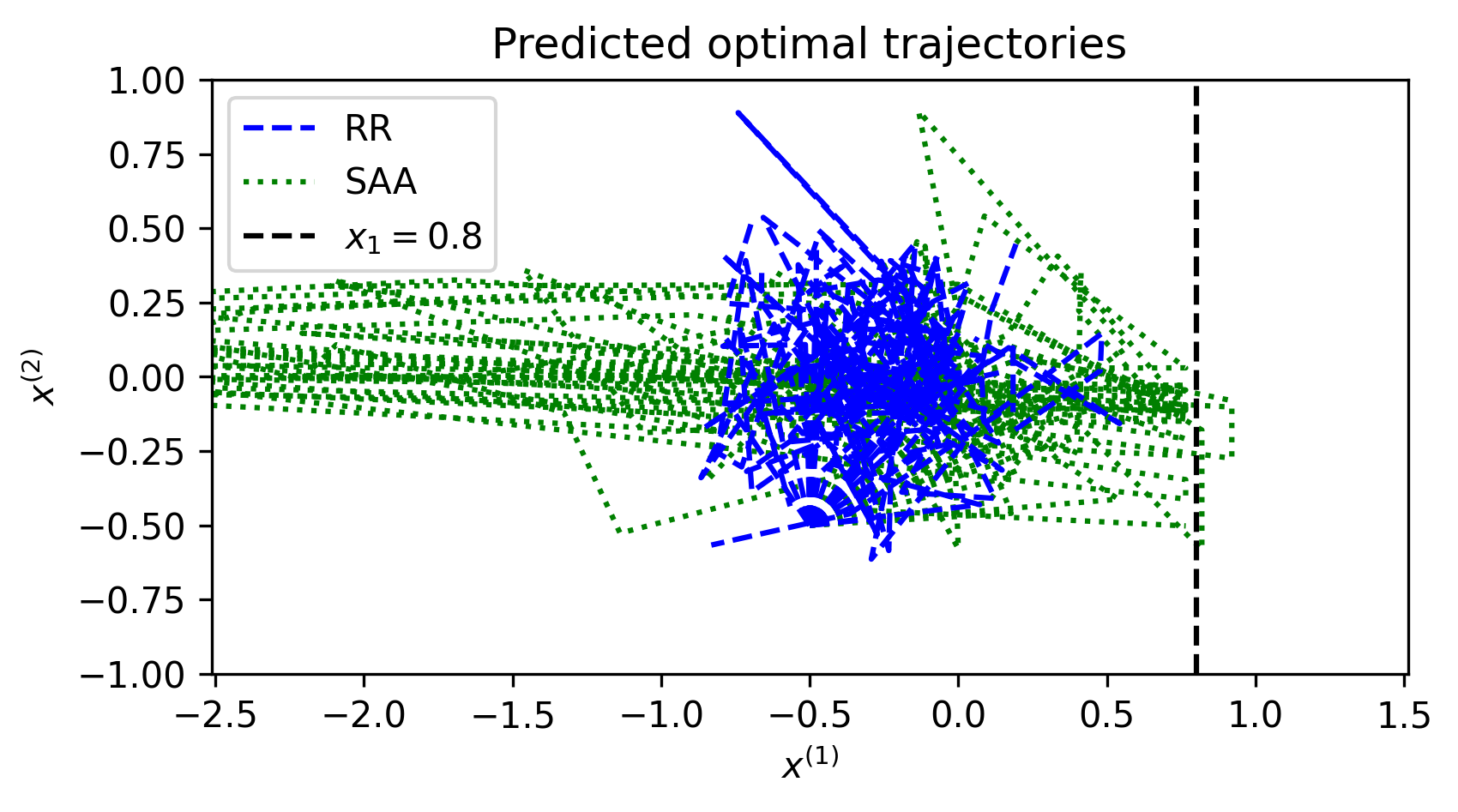}
    \caption{Comparison of optimal predicted trajectories for the RR (blue) and SAA (green) approaches. The vertical black line represents the constraint on the first coordinate of the state.}\label{Fig:planned_trajectories}
\end{figure}

\begin{figure}[t]
\includegraphics[width=1\columnwidth]{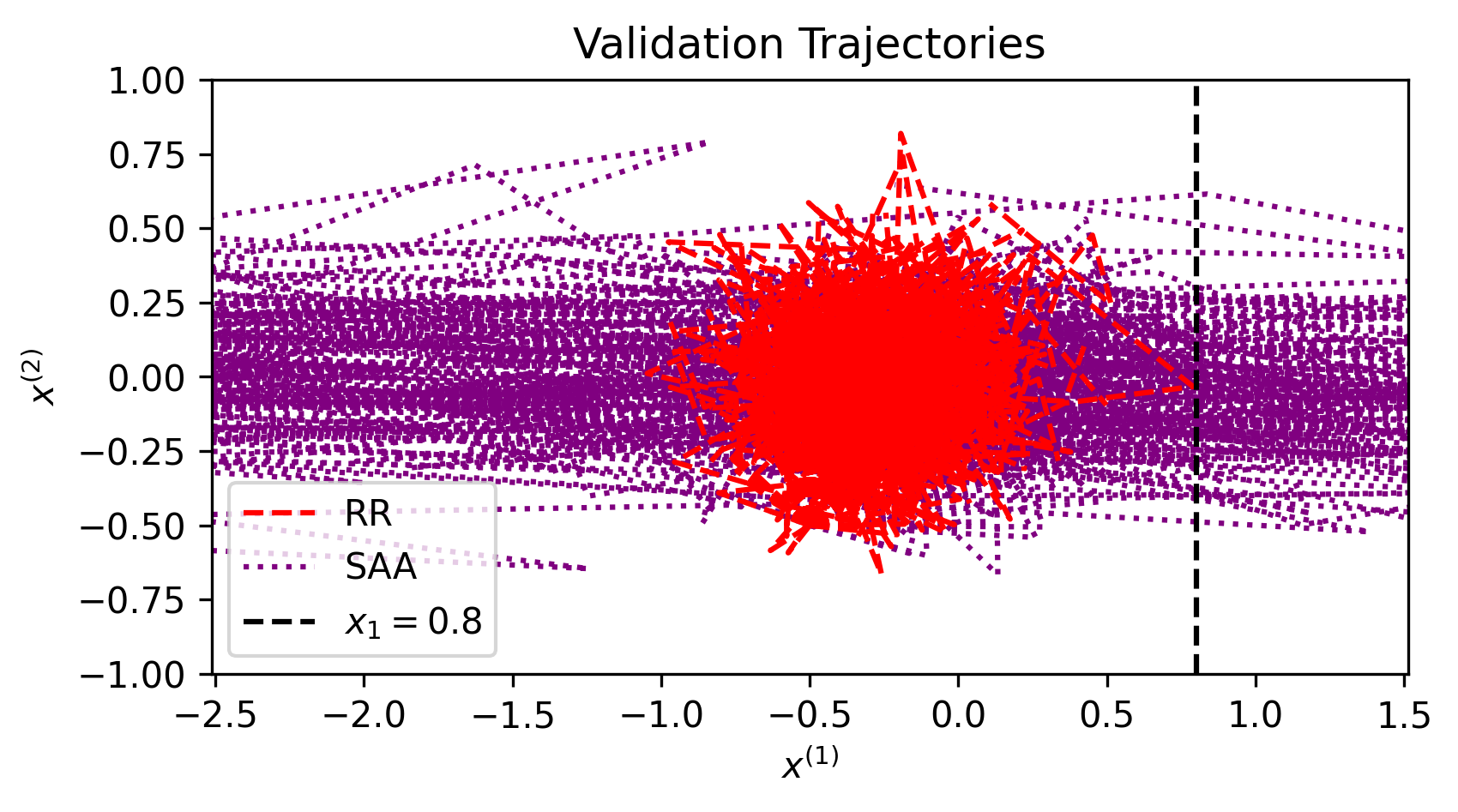}
    \caption{Comparison of validation trajectories for the RR (red) and SAA (purple) approaches. The SAA optimal controller is not robust against the model mismatch and the disturbance realizations, resulting in higher cost and large constraint violations.}\label{Fig:validation_trajectories}
\end{figure}

In the previous example, the model uncertainty severely affects the behavior of the plant dominating the closed-loop performance. That is because the sign of the elements $A_{12}$ and $B_{2}$ of the state and input matrix can be flipped resulting in different behaviors. In the following example, we demonstrate the performance obtained for random model mismatch realizations. We do so by sampling random model mismatch matrices $\Delta A_p$, $\Delta B_p$, $p=1,\dots,50$ that are scaled to obtain an uniform random distribution of model mismatches norms $\| \Delta A_p \|, \| \Delta B_p \| \in \mathcal{U} [0, 0.03]$. For every sample of model mismatch we have an independent dataset of $N = 20$ trajectories collected from the true system and we validate the performance against $100$ validation trajectories. In Fig.~\ref{Fig:boxplot} we can observe the distribution across the $50$ model realizations of the empirical (over the $100$ validation trajectories) validation cost and CVaR values. The CVaR constraint is to be considered violated if it is larger than $0$. 

\begin{figure}[t]
\includegraphics[width=1\columnwidth]{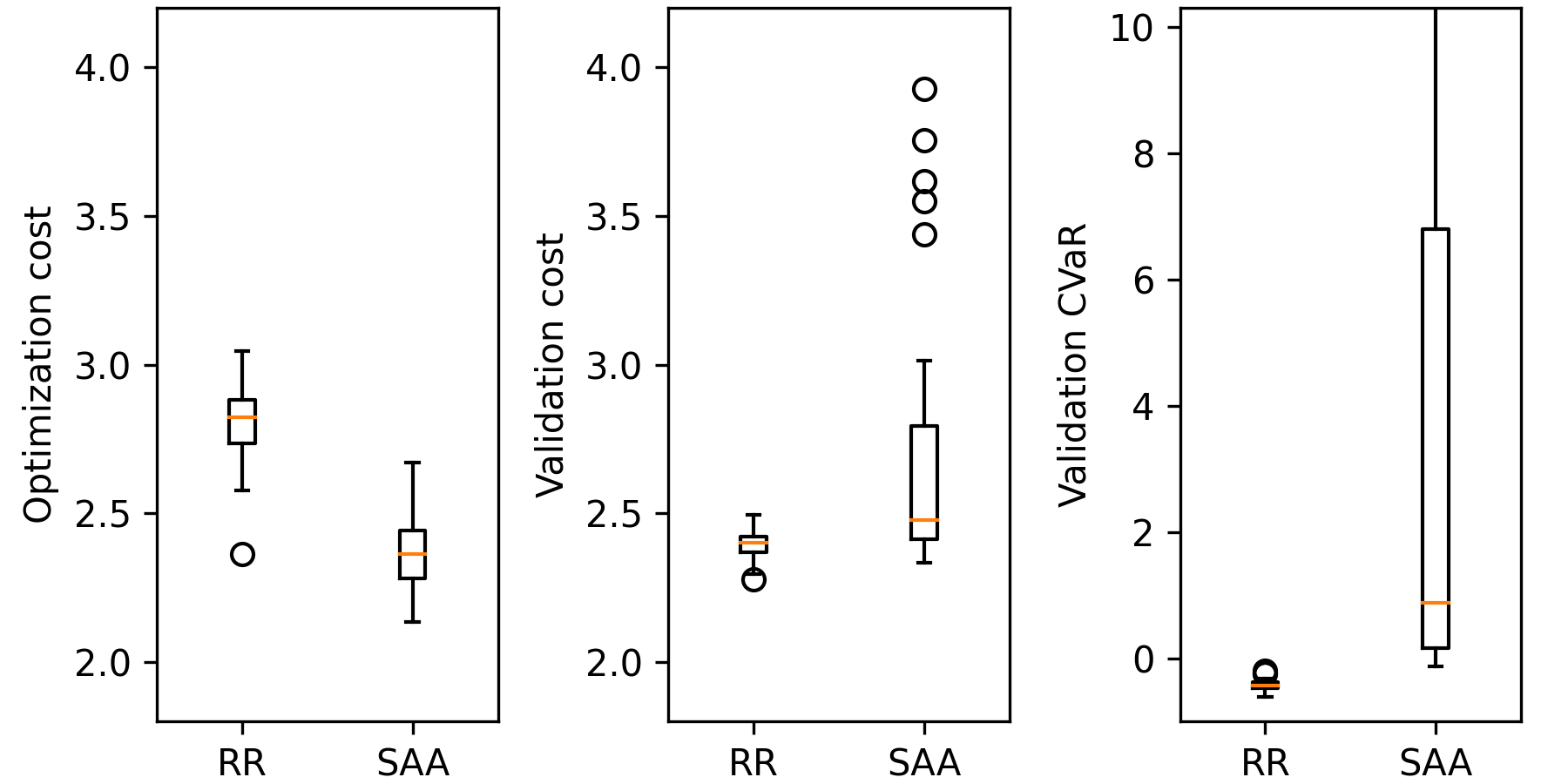}
    \caption{Comparison of optimization cost, validation cost and validation CVaR value, for the RR and SAA approaches.}\label{Fig:boxplot}
\end{figure}

We can observe that, while the optimization costs, i.e. relative to the predicted optimal trajectories, of the SAA are the lowest, the resulting controllers lead to very large validation cost and CVaR values when deployed on the true (unknown) dynamics. Conversely, the RR optimization results in higher optimization costs, that, following Theorem~\ref{Thm:worst-case_guarantees}, provide an upper bound on the validation cost attained on the real system. This fact is corroborated by the validation cost attained by the RR. We can make a similar statement for the CVaR constraint that is consistently violated by the SAA and always satisfied by the RR approach. The RR is therefore able to effectively robustify against the distributional shift induced by both the model mismatch and by the offline dataset limited size. This analysis shows that the RR approach is not too conservative with respect to the SAA even when the model error is not fundamentally altering the plant behavior, while it is always able to maintain robustness against the distribution shift. While some improvement could be obtained for the SAA by increasing the number of samples, which would reduce its sensitivity to the uncertainty in the disturbance distribution, the SAA algorithm does not have a principled way to robustify against the model mismatch.

We remark that, for large values of $\epsilon_A$ and $\epsilon_B$, the robust problem might be infeasible. This fact is worsened by the suboptimalities introduced by the reformulation that can make the constraints harder to satisfy.
A potential solution is to use smaller values for epsilon, e.g. by refining the quality of the available model with further identification experiments, or by collecting more state input trajectories to reduce the uncertainty about the disturbance distribution.

\section{Conclusions}\label{Sec:Conclusion}
We presented a novel distributionally robust state-feedback data-driven controller for uncertain discrete-time linear time-invariant systems affected by unknown additive disturbances. We formulated the problem as a stochastic optimization problem with respect to the worst-case probability distribution within an ambiguity set centered on the empirical nominal predictive distribution. Utilizing tools from robust System Level Synthesis and Distributionally Robust optimization we characterized how the controller affects the distributional shift between the predictive and the closed-loop distributions in the presence of uncertainty about both the dynamics and the disturbance distribution. This allowed to bound the size of the decision-dependent ambiguity set, providing finite-sample probabilistic guarantees on the worst-case expectation and CVaR constraint in the presence of uncertainty about both the dynamics and the disturbance distribution.
We derived a tractable Linear Programming formulation for the DR optimization problem for piece-wise affine cost and constraint functions, and demonstrated through numerical examples the effectiveness of the proposed doubly robust approach against the distributional shift 
which allow to safely control the system without significantly increasing the attained cost, even in presence of model mismatches and very limited information regarding the disturbance distribution.\\
Future work focuses on extending this framework to the episodic setting, where the controller and the model are iteratively updated exploiting the collected data.

\bibliographystyle{IEEEtran}
\bibliography{main.bib}

\begin{thebibliography}{10}
\providecommand{\url}[1]{#1}
\csname url@samestyle\endcsname
\providecommand{\newblock}{\relax}
\providecommand{\bibinfo}[2]{#2}
\providecommand{\BIBentrySTDinterwordspacing}{\spaceskip=0pt\relax}
\providecommand{\BIBentryALTinterwordstretchfactor}{4}
\providecommand{\BIBentryALTinterwordspacing}{\spaceskip=\fontdimen2\font plus
\BIBentryALTinterwordstretchfactor\fontdimen3\font minus
  \fontdimen4\font\relax}
\providecommand{\BIBforeignlanguage}[2]{{%
\expandafter\ifx\csname l@#1\endcsname\relax
\typeout{** WARNING: IEEEtran.bst: No hyphenation pattern has been}%
\typeout{** loaded for the language `#1'. Using the pattern for}%
\typeout{** the default language instead.}%
\else
\language=\csname l@#1\endcsname
\fi
#2}}
\providecommand{\BIBdecl}{\relax}
\BIBdecl

\bibitem{kouvaritakis2015developments}
M.~C. Kouvaritakis~Basil, ``Developments in robust and stochastic predictive
  control in the presence of uncertainty,'' \emph{ASCE-ASME Journal of Risk and
  Uncertainty in Engineering Systems Part B: Mechanical Engineering}, vol.~1,
  no.~2, p. 021003, 2015.

\bibitem{bemporad2007robust}
A.~Bemporad and M.~Morari, ``Robust model predictive control: A survey,'' in
  \emph{Robustness in identification and control}.\hskip 1em plus 0.5em minus
  0.4em\relax Springer, 2007, pp. 207--226.

\bibitem{mayne2000constrained}
D.~Q. Mayne, J.~B. Rawlings, C.~V. Rao, and P.~O. Scokaert, ``Constrained model
  predictive control: Stability and optimality,'' \emph{Automatica}, vol.~36,
  no.~6, pp. 789--814, 2000.

\bibitem{munoz2015robust}
D.~Mu{\~n}oz-Carpintero, M.~Cannon, and B.~Kouvaritakis, ``Robust {MPC}
  strategy with optimized polytopic dynamics for linear systems with additive
  and multiplicative uncertainty,'' \emph{Systems \& Control Letters}, vol.~81,
  pp. 34--41, 2015.

\bibitem{mesbah2016stochastic}
A.~Mesbah, ``Stochastic model predictive control: An overview and perspectives
  for future research,'' \emph{IEEE Control Systems Magazine}, vol.~36, no.~6,
  pp. 30--44, 2016.

\bibitem{cannon2009probabilistic}
M.~Cannon, B.~Kouvaritakis, and X.~Wu, ``Probabilistic constrained {MPC} for
  multiplicative and additive stochastic uncertainty,'' \emph{IEEE Transactions
  on Automatic Control}, vol.~54, no.~7, pp. 1626--1632, 2009.

\bibitem{cannon2010stochastic}
M.~Cannon, B.~Kouvaritakis, S.~V. Rakovi{\'c}, and Q.~Cheng, ``Stochastic tubes
  in model predictive control with probabilistic constraints,'' \emph{IEEE
  Transactions on Automatic Control}, vol.~56, no.~1, pp. 194--200, 2010.

\bibitem{kleywegt2002sample}
A.~J. Kleywegt, A.~Shapiro, and T.~Homem-de Mello, ``The sample average
  approximation method for stochastic discrete optimization,'' \emph{SIAM
  Journal on Optimization}, vol.~12, no.~2, pp. 479--502, 2002.

\bibitem{calafiore2006scenario}
G.~C. Calafiore and M.~C. Campi, ``The scenario approach to robust control
  design,'' \emph{IEEE Transactions on automatic control}, vol.~51, no.~5, pp.
  742--753, 2006.

\bibitem{micheli2022scenario}
F.~Micheli and J.~Lygeros, ``Scenario-based stochastic {MPC} for systems with
  uncertain dynamics,'' in \emph{2022 European Control Conference (ECC)}.\hskip
  1em plus 0.5em minus 0.4em\relax IEEE, 2022, pp. 833--838.

\bibitem{calafiore2012robust}
G.~C. Calafiore and L.~Fagiano, ``Robust model predictive control via scenario
  optimization,'' \emph{IEEE Transactions on Automatic Control}, vol.~58,
  no.~1, pp. 219--224, 2012.

\bibitem{gao2023distributionally}
R.~Gao and A.~Kleywegt, ``Distributionally robust stochastic optimization with
  wasserstein distance,'' \emph{Mathematics of Operations Research}, vol.~48,
  no.~2, pp. 603--655, 2023.

\bibitem{blanchet2022optimal}
J.~Blanchet, K.~Murthy, and F.~Zhang, ``Optimal transport-based
  distributionally robust optimization: Structural properties and iterative
  schemes,'' \emph{Mathematics of Operations Research}, vol.~47, no.~2, pp.
  1500--1529, 2022.

\bibitem{delage2010distributionally}
E.~Delage and Y.~Ye, ``Distributionally robust optimization under moment
  uncertainty with application to data-driven problems,'' \emph{Operations
  research}, vol.~58, no.~3, pp. 595--612, 2010.

\bibitem{van2015distributionally}
B.~P. Van~Parys, D.~Kuhn, P.~J. Goulart, and M.~Morari, ``Distributionally
  robust control of constrained stochastic systems,'' \emph{IEEE Transactions
  on Automatic Control}, vol.~61, no.~2, pp. 430--442, 2015.

\bibitem{coppens2021data}
P.~Coppens and P.~Patrinos, ``Data-driven distributionally robust {MPC} for
  constrained stochastic systems,'' \emph{IEEE Control Systems Letters},
  vol.~6, pp. 1274--1279, 2021.

\bibitem{li2021distributionally}
B.~Li, Y.~Tan, A.-G. Wu, and G.-R. Duan, ``A distributionally robust
  optimization based method for stochastic model predictive control,''
  \emph{IEEE Transactions on Automatic Control}, 2021.

\bibitem{taskesen2024distributionally}
B.~Taskesen, D.~Iancu, {\c{C}}.~Ko{\c{c}}yi{\u{g}}it, and D.~Kuhn,
  ``Distributionally robust linear quadratic control,'' \emph{Advances in
  Neural Information Processing Systems}, vol.~36, 2024.

\bibitem{mark2020stochastic}
C.~Mark and S.~Liu, ``Stochastic {MPC} with distributionally robust chance
  constraints,'' \emph{IFAC-PapersOnLine}, vol.~53, no.~2, pp. 7136--7141,
  2020.

\bibitem{zhong2021data}
Z.~Zhong, E.~A. del Rio-Chanona, and P.~Petsagkourakis, ``Data-driven
  distributionally robust {MPC} using the wasserstein metric,'' \emph{arXiv
  preprint arXiv:2105.08414}, 2021.

\bibitem{zolanvari2021data}
A.~Zolanvari and A.~Cherukuri, ``Data-driven distributionally robust iterative
  risk-constrained model predictive control,'' \emph{arXiv preprint
  arXiv:2111.12977}, 2021.

\bibitem{aolaritei2023wasserstein}
L.~Aolaritei, M.~Fochesato, J.~Lygeros, and F.~D{\"o}rfler, ``Wasserstein tube
  {MPC} with exact uncertainty propagation,'' \emph{arXiv preprint
  arXiv:2304.12093}, 2023.

\bibitem{mcallister2023distributionally}
R.~D. McAllister and P.~M. Esfahani, ``Distributionally robust model predictive
  control: Closed-loop guarantees and scalable algorithms,'' \emph{arXiv
  preprint arXiv:2309.12758}, 2023.

\bibitem{hakobyan2024wasserstein}
A.~Hakobyan and I.~Yang, ``Wasserstein distributionally robust control of
  partially observable linear stochastic systems,'' \emph{IEEE Transactions on
  Automatic Control}, 2024.

\bibitem{brouillon2023distributionally}
J.-S. Brouillon, A.~Martin, J.~Lygeros, F.~D{\"o}rfler, and G.~F. Trecate,
  ``Distributionally robust infinite-horizon control: from a pool of samples to
  the design of dependable controllers,'' \emph{arXiv preprint
  arXiv:2312.07324}, 2023.

\bibitem{micheli2022data}
F.~Micheli, T.~Summers, and J.~Lygeros, ``Data-driven distributionally robust
  {MPC} for systems with uncertain dynamics,'' in \emph{2022 IEEE 61st
  Conference on Decision and Control (CDC)}.\hskip 1em plus 0.5em minus
  0.4em\relax IEEE, 2022, pp. 4788--4793.

\bibitem{gevers2005identification}
M.~Gevers, ``Identification for control: {F}rom the early achievements to the
  revival of experiment design,'' \emph{European journal of control}, vol.~11,
  no. 4-5, pp. 335--352, 2005.

\bibitem{berberich2020data}
J.~Berberich, J.~K{\"o}hler, M.~A. M{\"u}ller, and F.~Allg{\"o}wer,
  ``Data-driven model predictive control with stability and robustness
  guarantees,'' \emph{IEEE Transactions on Automatic Control}, vol.~66, no.~4,
  pp. 1702--1717, 2020.

\bibitem{coulson2021distributionally}
J.~Coulson, J.~Lygeros, and F.~D{\"o}rfler, ``Distributionally robust chance
  constrained data-enabled predictive control,'' \emph{IEEE Transactions on
  Automatic Control}, vol.~67, no.~7, pp. 3289--3304, 2021.

\bibitem{pan2023distributionally}
G.~Pan and T.~Faulwasser, ``Distributionally robust uncertainty quantification
  via data-driven stochastic optimal control,'' \emph{IEEE Control Systems
  Letters}, 2023.

\bibitem{yin2021maximum}
M.~Yin, A.~Iannelli, and R.~S. Smith, ``Maximum likelihood estimation in
  data-driven modeling and control,'' \emph{IEEE Transactions on Automatic
  Control}, vol.~68, no.~1, pp. 317--328, 2021.

\bibitem{furieri2022near}
L.~Furieri, B.~Guo, A.~Martin, and G.~Ferrari-Trecate, ``Near-optimal design of
  safe output-feedback controllers from noisy data,'' \emph{IEEE Transactions
  on Automatic Control}, vol.~68, no.~5, pp. 2699--2714, 2022.

\bibitem{anderson2019system}
J.~Anderson, J.~C. Doyle, S.~H. Low, and N.~Matni, ``System level synthesis,''
  \emph{Annual Reviews in Control}, vol.~47, pp. 364--393, 2019.

\bibitem{goulart2006optimization}
P.~J. Goulart, E.~C. Kerrigan, and J.~M. Maciejowski, ``Optimization over state
  feedback policies for robust control with constraints,'' \emph{Automatica},
  vol.~42, no.~4, pp. 523--533, 2006.

\bibitem{furieri2019input}
L.~Furieri, Y.~Zheng, A.~Papachristodoulou, and M.~Kamgarpour, ``An
  input--output parametrization of stabilizing controllers: Amidst youla and
  system level synthesis,'' \emph{IEEE Control Systems Letters}, vol.~3, no.~4,
  pp. 1014--1019, 2019.

\bibitem{dean2020sample}
S.~Dean, H.~Mania, N.~Matni, B.~Recht, and S.~Tu, ``On the sample complexity of
  the linear quadratic regulator,'' \emph{Foundations of Computational
  Mathematics}, vol.~20, no.~4, pp. 633--679, 2020.

\bibitem{chen2024robust}
S.~Chen, V.~M. Preciado, M.~Morari, and N.~Matni, ``Robust model predictive
  control with polytopic model uncertainty through system level synthesis,''
  \emph{Automatica}, vol. 162, p. 111431, 2024.

\bibitem{kantorovich1958space}
L.~V. Kantorovich and S.~Rubinshtein, ``On a space of totally additive
  functions,'' \emph{Vestnik of the St. Petersburg University: Mathematics},
  vol.~13, no.~7, pp. 52--59, 1958.

\bibitem{mohajerin2018data}
P.~Mohajerin~Esfahani and D.~Kuhn, ``Data-driven distributionally robust
  optimization using the wasserstein metric: Performance guarantees and
  tractable reformulations,'' \emph{Mathematical Programming}, vol. 171, no.~1,
  pp. 115--166, 2018.

\bibitem{simchowitz2018learning}
M.~Simchowitz, H.~Mania, S.~Tu, M.~I. Jordan, and B.~Recht, ``Learning without
  mixing: Towards a sharp analysis of linear system identification,'' in
  \emph{Conference On Learning Theory}.\hskip 1em plus 0.5em minus 0.4em\relax
  PMLR, 2018, pp. 439--473.

\bibitem{smith2006optimizer}
J.~E. Smith and R.~L. Winkler, ``The optimizer’s curse: Skepticism and
  postdecision surprise in decision analysis,'' \emph{Management Science},
  vol.~52, no.~3, pp. 311--322, 2006.

\bibitem{fournier2015rate}
N.~Fournier and A.~Guillin, ``On the rate of convergence in wasserstein
  distance of the empirical measure,'' \emph{Probability Theory and Related
  Fields}, vol. 162, no.~3, pp. 707--738, 2015.

\bibitem{shafieezadeh2023new}
S.~Shafieezadeh-Abadeh, L.~Aolaritei, F.~D{\"o}rfler, and D.~Kuhn, ``New
  perspectives on regularization and computation in optimal transport-based
  distributionally robust optimization,'' \emph{arXiv preprint
  arXiv:2303.03900}, 2023.

\bibitem{sieber2021system}
J.~Sieber, S.~Bennani, and M.~N. Zeilinger, ``A system level approach to
  tube-based model predictive control,'' \emph{IEEE Control Systems Letters},
  vol.~6, pp. 776--781, 2021.

\end{thebibliography}
\appendix
\subsection{Proof of Lemma~\ref{Lem:1}}
    From the definition of the Wasserstein metric we have
    \begin{equation}\label{eq:lemma1}
        \begin{aligned}
            d_{W}\!\left(\Pemp, \Pclemp \right) := & \inf_{\Pi} \int_{\mathcal{Y}^{2}}\left\|\bs{x}_{1}-\bs{x}_{2}\right\| \Pi\left(\mathrm{d} \bs{x}_{1}, \mathrm{d} \bs{x}_{2}\right) \\
            \leq & \frac{1}{N}\sum_{i=1}^N \| \yemp - \yclemp \| \,
        \end{aligned}
    \end{equation}
    where the inequality follows from choosing the (sub-optimal) coupling $\Pi(\bs{x}_{1}=\yemp,\bs{x}_{2}=\yclemp)=1/N$, for all $i=1,\dots,N$.

    We can write $\yemp - \yclemp$ as
    \begin{equation*}
    \begin{aligned}
        \yemp\!-\! \yclemp = & \PHIh \wemp - \resolvent \Phi \wtremp\\
        	\!= & \PHIh \wemp - \resolvent \Phi \left(\wemp + Z \Delta\!\mathcal{M} \xui \right)\\
        	\!= & \resolvent \DELTAtilde \Phi \wemp - \resolvent \DELTAtilde \xui \\
            \!= & \resolvent \DELTAtilde \left(\PHIh \wemp -\xui \right)\ ,
    \end{aligned}
    \end{equation*}
    where the second equality follows from~\eqref{eq:dynamics_matrix_form} and~\eqref{eq:nominal_empirical_distribution_of_the_disturbances}.
Substituting this in~\eqref{eq:lemma1} concludes the proof. \hfill $\blacksquare$

\subsection{Proof of Lemma~\ref{Lem:2}}
From the definition of the Wasserstein metric we can write
 \begin{equation*}
     \begin{aligned}
     & d_{W}\!\left(\Pclemp, \Pcl \right)
        := \inf_{\Pi} \left\{\int_{\mathcal{Y}^2} \left\| {\bs{y}}^\prime - \bs{y}  \right\| \Pi\left(\mathrm{d} {\bs{y}}^\prime, \mathrm{d} \bs{y}\right)\right\}\\
        =& \inf_{\Pi} \left\{\!\int_{\!\mathcal{W}^2} \!\left\| \resolvent \Phi \left( {\bs{w}}^\prime - \bs{w} \right) \right\| \Pi\left(\mathrm{d} {\bs{w}}^\prime, \mathrm{d} \bs{w}\right)\right\}\\
        \le& \inf_{\Pi} \left\{\!\int_{\!\mathcal{W}^2} \!\left\| \resolvent \Phi Z\right\|\left\| \left( {\bs{w}}^\prime - \bs{w} \right) \right\| \Pi\left(\mathrm{d} {\bs{w}}^\prime, \mathrm{d} \bs{w}\right)\right\}\\
        = & \left\| \resolvent \Phi Z \right\| \inf_{\Pi} \left\{\!\int_{\!\mathcal{W}^2} \!\left\| \left( {\bs{w}}^\prime - \bs{w} \right) \right\| \Pi\left(\mathrm{d} {\bs{w}}^\prime, \mathrm{d} \bs{w}\right)\right\}\\   
        = & \left\| \resolvent \Phi Z \right\| d_{W}\!\left(\Pwbar, \Pw \right) \ ,\\ 
        & &\qed
     \end{aligned}
 \end{equation*}
where the inequality follows from the sub-multiplicativity property of the norm and the fact that the first block element of the disturbance $\bs{w}$, i.e. the one related to the initial condition, is deterministic and the same for both vectors. Hence, the first block entry of ${\bs{w}}^\prime - \bs{w}$ always $0$, which allows the first $n$ columns of $R_{\Phi}\Phi$ to be excluded.

\subsection{Proof of Lemma~\ref{Lemma:small_gain}}
The condition~\eqref{eq:constraint_on_gamma} implies that 
\begin{equation*}
    \left\|\DELTAtilde\right\|<\gamma<1\ .
\end{equation*}
This condition allows us to apply the property of convergence of the Neumann series to bound the inverse term as
\begin{equation*}\begin{aligned}
			\| (I-\DELTAtilde)^{-1} \| & \leq \sum_{j=0}^{\infty} (\| \DELTAtilde \|)^{j} \\
			& \leq \frac{1}{1 - \left\| \DELTAtilde \right\|} \leq \frac{1}{1 - \gamma}\ .
		\end{aligned}
	\end{equation*}
We can now upper bound the terms appearing in Theorem~\ref{Thm:tri_ineq} as follows:
	\begin{equation*}
		\scalemath{0.88}{\begin{aligned}
			& d_{W}\left(\Pemp, \Pclemp\right) \\
			\leq & \frac{1}{N}\sum_{i=1}^N \left\| \resolvent \Phi Z \Delta\!\mathcal{M} \left(\PHIh \wemp -\xui \right) \right\| \\
			= & \frac{1}{N}\sum_{i=1}^N \left\| \left(I+\DELTAtilde\right)^{-1} \DELTAtilde \left(\PHIh \wemp -\xui \right) \right\| \\
			\leq & \frac{\gamma}{1-\gamma} \frac{1}{N}\sum_{i=1}^N \left\| \PHIh \wemp -\xui \right\| \ ,
		\end{aligned}}
	\end{equation*}
	and 
	\begin{equation*}
		\begin{aligned}
			d_{W}\left(\Pclemp, \Pcl\right)
			\leq & \left\| \left(I+\Phi Z \Delta\!\mathcal{M}\right)^{-1} \Phi Z\right\| d_{W}\!\left(\Pwbar, \Pw\right)\\
            \leq & \frac{1}{1-\gamma} \left\| \Phi Z\right\| d_{W}\!\left(\Pwbar, \Pw\right)\\
			\leq & \frac{\kappa}{1-\gamma}\, \left\| \Phi Z\right\| \ ,
		\end{aligned}
	\end{equation*}
where we exploited the sub-multiplicativity property of the norm.
\qed

 \subsection{Proof of Theorem~\ref{Thm:Tractabe_worst_case}}
     For a convex piecewise affine cost function $h\left(\bs{y}\right)$, we can apply the results from Theorem 6.3 and Remark 6.6 of~\cite{mohajerin2018data} to reformulate the supremum appearing in the worst-case expectation of~\eqref{eq:DR_problem} as
    
\begin{equation*}
    \begin{aligned}
        \sup_{\mathbb{Q} \in \mathcal{B}^{\varepsilon}\left(\Pemp\right)} \mathbb{E}^{\bs{y}\sim\mathbb{Q}}\left[ h\left(\bs{y} \right) \right] = &
	\min_{\lambda} \ \lambda\, \underline{\varepsilon} +\frac{1}{N} \sum_{i=1}^{N} h(\bs{\hat{y}}^i)\\
       & \ \text{s.t.} \quad \|a_j\|_{\infty} \leq \lambda \quad \forall\ j=1,\dots,N_j\ .
\end{aligned}
\end{equation*}
From the definition of CVaR, the constraints in Problem~\eqref{eq:DR_problem} can be written as the set
\begin{equation*}
        \begin{aligned}
            &\left\{ \Phi, \gamma \middle|\, \sup_{\mathbb{Q} \in \mathcal{B}^{\varepsilon}\left(\Pemp\right)} \text{CVaR}_{1-\beta}^{\bs{y}\sim\mathbb{Q}}\left(g(\bs{y})\right) \leq 0 \right\}\\
            = & \left\{  \Phi, \gamma \middle|\, \sup_{\mathbb{Q} \in \mathcal{B}^{\varepsilon}\left(\Pemp\right)} \inf_{t \in \mathbb{R}} \left[\mathbb{E}^{\bs{y}\sim\mathbb{Q}}\left[(g\left(\bs{y} \right)+t)_{+}\right]-t \beta\right] \leq 0 \right\}\\
            \supseteq & \left\{  \Phi, \gamma \middle|\, \inf_{t \in \mathbb{R}} \sup_{\mathbb{Q} \in \mathcal{B}^{\varepsilon}\left(\Pemp\right)} \left[\mathbb{E}^{\bs{y}\sim\mathbb{Q}}\left[(g\left(\bs{y} \right)+t)_{+}\right]-t \beta\right] \leq 0 \right\}
    \end{aligned}
\end{equation*}
Noting that $(g\left(\bs{y} \right)+t)_+$ is a convex piecewise affine function for $g\left(\bs{y} \right)$ convex and piecewise affine, we can follow the same procedure and rewrite the worst-case as
\begin{equation*}
    \begin{aligned}
        \sup_{\mathbb{Q} \in \mathcal{B}^{\varepsilon}\left(\Pemp\right)} \mathbb{E}^{\bs{y}\sim\mathbb{Q}}&\left[ (g\left(\bs{y}\right)+t)_+ \right] =\\
        =&\quad \min_{\theta} \ \theta\, \underline{\varepsilon} +\frac{1}{N} \sum_{i=1}^{N} (g(\bs{\hat{y}}^i)+t)_+\\
       &\quad \ \text{s.t.} \quad \|c_l\|_{\infty} \leq \theta \quad \forall\ l=1,\dots,N_L\ .
\end{aligned}
\end{equation*}
At optimality the minimum over $\lambda$ and $\theta$ is obtained for $\lambda = \underline{\lambda} = \max_{j = 1, \dots, N_J} \|a_j\|_{\infty}$ and $\theta = \underline{\theta} = \max_{l = 1, \dots, N_L} \|a_l\|_{\infty}$.
Taking the infimum over $\Phi$ for any fixed $\gamma$ results in the finite-dimensional convex program
\begin{equation*}
    \begin{aligned}{J}^{RR}(\pi) =
        \inf _{\Phi, s_{i},q_i}\ & \underline{\lambda}\,  \underline{\varepsilon} +\frac{1}{N} \sum_{i=1}^{N} s_{i} \\
        \text { s.t. } & \eqref{eq:causality},\  \eqref{eq:SLS_dynamics_constraint_nominal}, \ \eqref{eq:constraint_on_gamma}, \\ 
        & a_{j}\hat{\bs{y}}^i + b_{j} \leq s_{i} \\
        & \quad{\forall\  i\!=\!1,\dots,N,\  j\!=\!1,\dots,N_J} \\
        \text{CVaR:}& \left\{\begin{aligned}&\underline{\theta}\, \underline{\varepsilon} + \frac{1}{N}\! \sum_{i=1}^{N} \!\left(c_{l} \hat{\bs{y}}^{i} + d_{l} +t \right)_{+} \leq t \beta \\
        &\forall\  i=1,\dots,N,\  l=1,\dots, N_L
    \end{aligned}\right.
    \end{aligned}\ ,
\end{equation*}

\noindent The claim follows by noting that $\underline{\varepsilon}$ holds uniformly for any~$\Phi$. 
\qed

\begin{IEEEbiography}[{\includegraphics[width=1in,height=1.25in,clip,keepaspectratio]{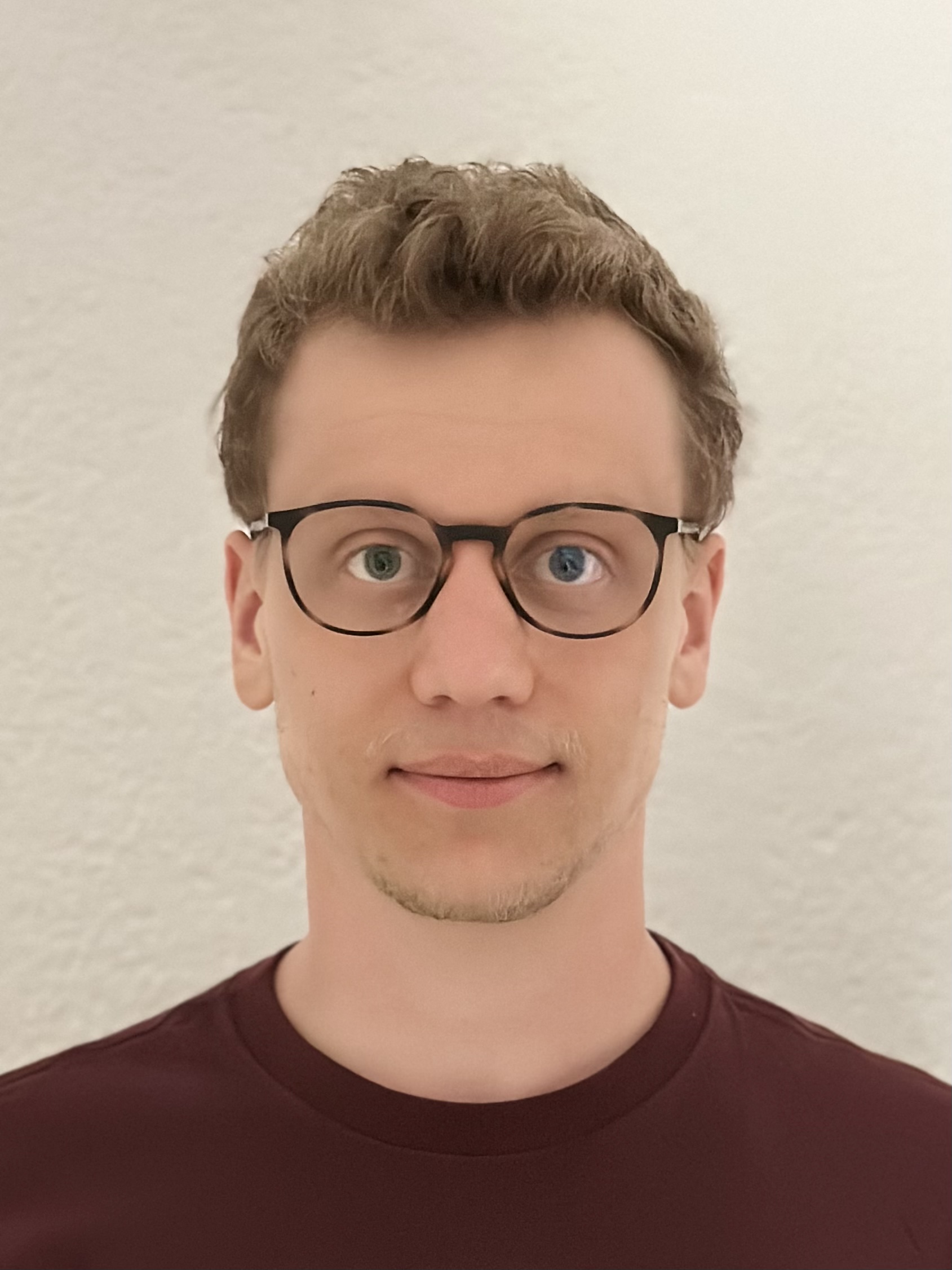}}]{Francesco Micheli} is a PhD student at the Automatic Control Laboratory at ETH Zurich, under the supervision of Prof. J. Lygeros. He received his B.Sc. and M.Sc. degrees in Mechanical Engineering from Politecnico di Milano, Italy, in 2017 and 2018, respectively. His research focuses on safe learning and control, distributionally robust optimization, and robotics.
\end{IEEEbiography}

\begin{IEEEbiography}[{\includegraphics[width=1in,height=1.25in,trim={4cm 12cm 4cm 0},clip,keepaspectratio]{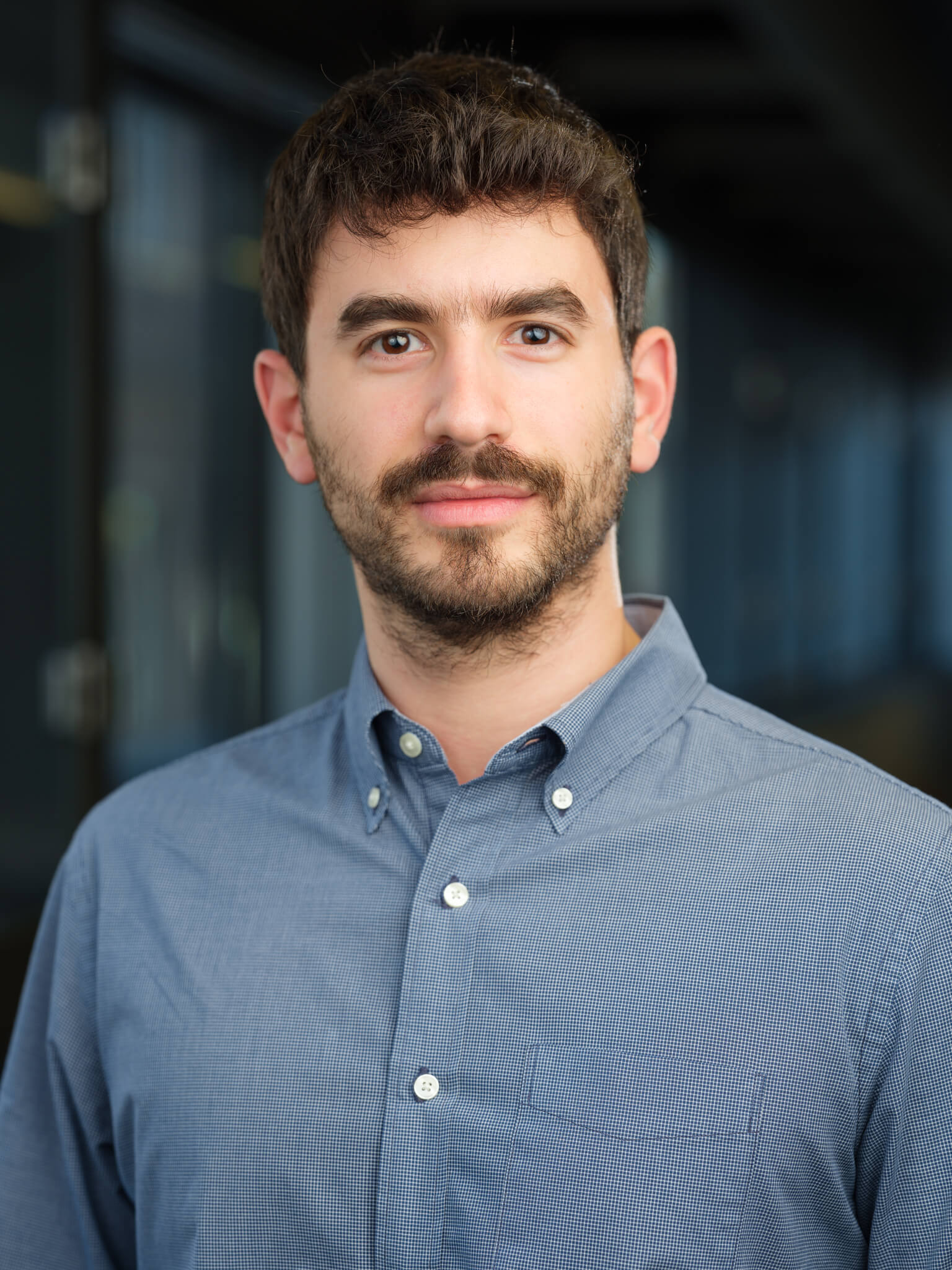}}]{Anastasios Tsiamis} (Member, IEEE) received the Diploma
	degree in electrical and computer engineering from
	the National Technical University of Athens, Greece, in 2014. He obtained his Ph.D. at the Department of Electrical and Systems Engineering, University of Pennsylvania, Philadelphia, PA, USA in 2022. Currently, he is a postdoctoral researcher at the Automatic Control Laboratory, ETH Zurich, Switzerland. His research interests include statistical learning for control, risk-aware control and optimization, and networked control systems.
Anastasios Tsiamis was a finalist for the IFAC Young Author Prize in IFAC 2017 World Congress and a finalist for the Best Student Paper Award in ACC 2019.
\end{IEEEbiography}

\begin{IEEEbiography}[{\includegraphics[width=1in,height=1.25in,clip,keepaspectratio]{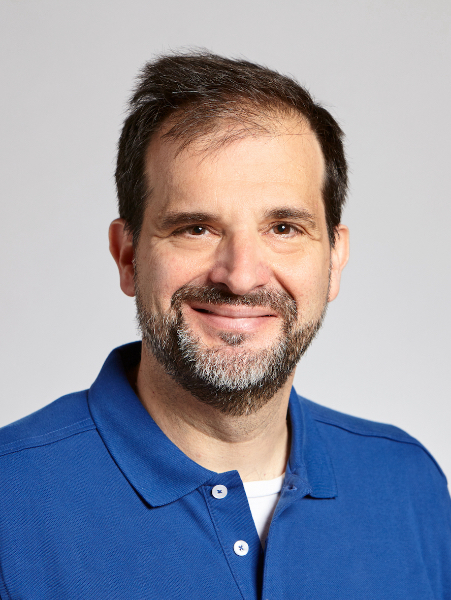}}]{John Lygeros} (Fellow, IEEE) completed a B.Eng. degree in electrical engineering in 1990 and an M.Sc. degree in Systems Control in 1991, both at Imperial College of Science Technology and Medicine, London, U.K.. In 1996 he obtained a Ph.D. degree from the Electrical Engineering and Computer Sciences Department, University of California, Berkeley. During the period 1996-2000 he held research appointments at the National Automated Highway Systems Consortium, Berkeley, the Laboratory for Computer Science, M.I.T., and the Electrical Engineering and Computer Sciences Department at U.C. Berkeley. Between 2000 and 2003 he was a University Lecturer at the Department of Engineering, University of Cambridge, U.K., and a Fellow of Churchill College. Between 2003 and 2006 he was an Assistant Professor at the Department of Electrical and Computer Engineering, University of Patras, Greece. In July 2006 he joined the Automatic Control Laboratory at ETH Zurich, where he is currently serving as the Head of the laboratory. His research interests include modelling, analysis, and control of hierarchical, hybrid, and stochastic systems, with applications to biochemical networks, transportation systems, energy systems, and industrial processes. John Lygeros is a Fellow of the IEEE, and a member of the IET and the Technical Chamber of Greece; between 2013 and 2023 he served as the Vice President for Finances and a Council Member of the International Federation of Automatic Control (IFAC), as well as on the Board of the IFAC Foundation.
\end{IEEEbiography}

\end{document}